\documentclass[a4paper,11pt,reqno]{amsart}
\usepackage{amssymb,amsthm,amsmath}
\usepackage{mathtools}
\usepackage{amsfonts}
\usepackage{amscd}
\usepackage{amssymb}
\usepackage{enumerate}
\usepackage{bbm}
\usepackage{graphicx}
\allowdisplaybreaks
\usepackage{color}
\usepackage{amsbsy}
\usepackage{graphicx}
\usepackage{amsthm}
\usepackage{amsmath}
\usepackage{amsxtra}
\usepackage{mathrsfs}
\usepackage{bbm}
\usepackage{dsfont}
\usepackage{enumitem}
\usepackage{comment}
\usepackage{xcolor}
\usepackage{float} 
\usepackage{url}
\usepackage{soul}
\usepackage[hidelinks]{hyperref}

\usepackage{ifthen}

\hypersetup{
    colorlinks=true,
    linkcolor=red,      
    citecolor=red,       
    urlcolor=cyan        
}

\newtheorem{theorem}{Theorem}[section]
\newtheorem{defi}[theorem]{Definition}
\newtheorem{remark}[theorem]{Remark}
\newtheorem{corollary}[theorem]{Corollary}
\newtheorem{prop}[theorem]{Proposition}
\newtheorem{lemma}[theorem]{Lemma}

\numberwithin{equation}{section}

\definecolor{red}{rgb}{1.0, 0.0, 0.0}
\newcommand{\Bea}{\begin{eqnarray*}}
    \newcommand{\Eea}{\end{eqnarray*}}
\newcommand{\Be} {\begin{equation*}}
    \newcommand{\Ee} {\end{equation*}}
\newcommand{\be} {\begin{equation}}
    \newcommand{\ee} {\end{equation}}
\newcommand{\bea} {\begin{eqnarray}}
    \newcommand{\eea} {\end{eqnarray}}

\usepackage{color}

\title[Uncertainty Principles for the STFT on the Heisenberg Group]{Uncertainty Principles for the Short-Time Fourier Transform on the Heisenberg Group}
\author[A.~Dabra]{Arvish Dabra}
\address{
    Arvish Dabra:
    \endgraf
    Department of Mathematics
    \endgraf
    Indian Institute of Science Education and Research Bhopal
    \endgraf
    Bhopal, Madhya Pradesh - 462066
    \endgraf
    India
    \endgraf
    {\it E-mail address:} {\rm arvishdabra3@gmail.com}
}

\author[A.~Dasgupta]{Aparajita Dasgupta}
\address{
    Aparajita Dasgupta:
    \endgraf
    Department of Mathematics
    \endgraf
    Indian Institute of Technology Delhi
    \endgraf
    Hauz Khas, New Delhi - 110016
    \endgraf
    India
    \endgraf
    {\it E-mail address:} {\rm adasgupta@maths.iitd.ac.in}
}

\author[P.~Gulia]{Prerna Gulia}
\address{
    Prerna Gulia:
    \endgraf
    Department of Mathematics
    \endgraf
    Indian Institute of Technology Delhi
    \endgraf
    Hauz Khas, New Delhi - 110016
    \endgraf
    India
    \endgraf
    {\it E-mail address:} {\rm prernagulia64@gmail.com}
}

\subjclass{Primary 42C20, 43A85; Secondary 43A32}
\keywords{Benedicks' theorem, Donoho--Stark principle, Heisenberg group, Lieb's inequality, Short-time Fourier transform, Uncertainty principles.}

\vspace{1cm}

\allowdisplaybreaks

\begin{document}

\begin{abstract}

We develop a systematic theory of uncertainty principles for the short-time Fourier transform (STFT) on the Heisenberg group. Building on recent developments in modulation spaces and time-frequency analysis on the Heisenberg group introduced by Fischer et al.~and later by Biswas--Thangavelu, we establish noncommutative analogues of several fundamental uncertainty principles in time-frequency analysis, including Benedicks’ theorem, the Donoho--Stark uncertainty principle, and Lieb’s inequality. As a consequence of Lieb's inequality, we derive an entropy-based uncertainty principle of Hirschman type. We further establish a Heisenberg-type uncertainty inequality and local uncertainty principles in the spirit of Price. In addition, we prove a Beurling--Hardy-type theorem that captures the interplay between decay and phase-space localization. Finally, we investigate decay properties of the STFT and their implications for time-frequency concentration. These results extend a broad spectrum of classical uncertainty phenomena from the Euclidean setting to the Heisenberg group, highlighting the role of noncommutative harmonic analysis in the study of phase-space localization.

\end{abstract}

\maketitle

\section{Introduction}

Uncertainty principles occupy a central position in harmonic analysis, capturing the fundamental obstruction to the simultaneous localization of a function and its transform. This phenomenon manifests itself in various forms and reflects an intrinsic trade-off between spatial and frequency localization, where concentration in one domain necessarily limits concentration in the other. The classical Heisenberg uncertainty principle provides the fundamental formulation of this phenomenon. For a sufficiently regular function $f$ on $\mathbb{R}^n$, it quantifies the reciprocal relationship between the spatial spread of $f$ and the frequency spread of its Fourier transform $\mathcal{F}(f)$. This fundamental principle has subsequently evolved into a rich theory encompassing qualitative, quantitative, entropic, weighted, and decay-based uncertainty principles.

Since its classical formulation, the uncertainty principle has developed in numerous directions. Different notions of concentration and localization have led to weighted uncertainty inequalities, entropic uncertainty principles \cite{Hirschman}, qualitative uncertainty principles, support-type results, and uncertainty principles formulated in terms of decay conditions \cite{gevrey}. A comprehensive overview of various forms of uncertainty principles can be found in the survey by Folland and Sitaram \cite{FSsurvey}. Over the past several decades, uncertainty principles have also been studied in a wide variety of mathematical settings, including different integral transforms, function spaces, and non-Euclidean structures. Given the extensive literature on the subject, it is impossible to cite all relevant contributions; we refer the reader to \cite{Ciatti-Ricci-Sundari-uncertainity_on_stratified,dabra2026uncertainty,jaming-Ghobber-bessel_transform, Price-Sitaram-Local_uncertainty,Smith-uncertainty_on_lct} and the references therein.

In signal analysis and quantum mechanics, the study of uncertainty is naturally connected with representations that describe a signal simultaneously in the time and frequency domains. Two important examples are the short-time Fourier transform (STFT) and the Wigner distribution. These representations associate with a function on $\mathbb{R}^n$ a function on the phase space \(\mathbb{R}^n\times\mathbb{R}^n\), thereby providing information about the distribution of a signal across both time and frequency. This phase-space perspective provides a natural framework for formulating and studying uncertainty principles. An illuminating motivation for time-frequency analysis, based on an analogy with a musical score, can be found in \cite{Foundations-of-time-frequency-analysis}.

The short-time Fourier transform on \(\mathbb{R}^n\), defined using modulation and Euclidean translation, has been extensively studied in connection with uncertainty principles, see \cite{bonami,feichtinger-advances_gabor_analysis,Foundations-of-time-frequency-analysis,Poria-uncertainty_math_phy,wilczok-uncertainty}. In particular, various classical uncertainty principles, including the Donoho--Stark principle \cite{Donoho-stark_paper} and Benedicks' theorem \cite{benedicks-original-paper}, admit natural analogues in the time-frequency setting. More recently, Poria and the second author \cite{poriadasgupta2024} investigated uncertainty principles for the short-time Fourier transform on the lattice \(\mathbb{Z}^n \times \mathbb{T}^n\), establishing analogues of the Donoho--Stark uncertainty principle, Benedick's theorem and several related results.

The short-time Fourier transform also plays a central role in the theory of modulation spaces. Introduced by Feichtinger \cite{Feichtinger-modulation_spaces} in 1983, modulation spaces have become an important class of function spaces in time-frequency analysis. They provide a natural framework for measuring the time-frequency concentration of a function through integrability conditions imposed on its short-time Fourier transform. More precisely, for a suitable nonzero window $g$, the modulation space $M^{p,q}(\mathbb{R}^n)$ consists of those functions $f$ for which the STFT $V_gf$ belongs to the mixed-norm space $L^{p,q}(\mathbb{R}^{2n})$. Thus, modulation spaces encode information about the joint behavior of a function in the spatial and frequency variables, rather than treating these aspects independently. The theory of modulation spaces has developed extensively and has deep connections with several important areas of analysis, including Besov spaces, atomic decompositions, and Gabor analysis, see \cite{Feichtinger-atomic_modulation_spaces,Feichtinger-grochenig-gabor_frames,Frazier-decomposition_besov_spaces}.

A natural question is whether the time-frequency framework underlying the classical short-time Fourier transform, together with its associated uncertainty principles, extends to non-Euclidean settings. The Heisenberg group $\mathbb{H}^n$ provides a particularly natural setting for such an extension, owing to its central role in noncommutative harmonic analysis and its close connection with time-frequency analysis. Fischer, Rottensteiner, and Ruzhansky~\cite{rottensteiner-heisenberg_modulation_space} introduced the Heisenberg-modulation spaces $E^{p,q}(\mathbb{R}^{2n+1})$ through the matrix coefficients of a square-integrable projective representation of the Dynin--Folland group. More recently, Biswas and Thangavelu~\cite{Biswas-Thangavelu} proposed a different construction of modulation spaces on $\mathbb{H}^n$, based on twisted modulation spaces and representations of another nilpotent Lie group. The present work is based on the former framework and investigates uncertainty principles for the short-time Fourier transform arising from the Dynin--Folland representation.

The Dynin--Folland group $\mathbb{H}_{n,2}$ originates in the work of Dynin~\cite{Dynin}, who developed a Weyl quantization of $\mathbb{H}^n$ using the representation theory of this group. It is a three-step nilpotent Lie group that may be realized as a semidirect product of $\mathbb{R}^{2n+2}$ and $\mathbb{H}^n$. Its Lie algebra is generated by the left-invariant vector fields on the Heisenberg group, together with the operators of multiplication by its $(2n+1)$ coordinate functions. Folland subsequently highlighted this construction in his monograph~\cite[p.~90]{folland-harmonic_phase_space}, observing that $\mathbb{H}_{n,2}$ may naturally be regarded as ``the Heisenberg group of the Heisenberg group.'' This viewpoint was developed further in~\cite{folland-meta_heisenberg}, where Folland systematically investigated analogous ``meta-Heisenberg groups'' associated with two-step nilpotent groups. The terminology \emph{Dynin--Folland group} recognizes Dynin's original construction and Folland's subsequent development of its structural theory. 

The representation-theoretic origin of the transform considered here is essential to its structure. If $\pi^{\mathrm{pr}}$ denotes the relevant projective representation of $\mathbb{H}_{n,2}$ and
\[
\check{\pi}^{\mathrm{pr}}
    = \mathcal{F}^{-1}\pi^{\mathrm{pr}}\mathcal{F},
\]
then the short-time Fourier transform of $f$ with respect to a window $g$ is given by
\[
V_gf(Q,P)
    = \left\langle f,\check{\pi}^{\mathrm{pr}}(Q,P)g\right\rangle
    = \left\langle \mathcal{F}(f),
      M_QT_P\mathcal{F}(g)\right\rangle,
    \qquad Q,P\in\mathbb{H}^n,
\]
where
\[
T_Ph(X)=h(X\cdot P)
\]
is the right translation with respect to the Heisenberg group law, while
\[
M_Qh(X)=e^{2\pi i\langle Q,X\rangle}h(X)
\]
is the Euclidean modulation under the identification
$\mathbb{H}^n\simeq\mathbb{R}^{2n+1}$. Thus, unlike the classical STFT on $\mathbb{R}^d$, the transform studied here combines Euclidean frequency modulation with noncommutative spatial translation. Although its phase space is $\mathbb{H}^n\times\mathbb{H}^n$, the two variables play analytically different roles.

This hybrid structure is not merely a formal distinction, it determines the analytical methods used throughout the paper. For a fixed translation parameter, the transform can be analyzed using the Euclidean Fourier transform in the modulation variable. Integration with respect to the translation parameter, however, leads naturally to convolution on the Heisenberg group. This interaction is particularly evident in the proof of the Lieb estimate, which combines the sharp Hausdorff--Young inequality on $\mathbb{R}^{2n+1}$ with the sharp Young convolution inequality on $\mathbb{H}^n$. Consequently, the classical Euclidean arguments do not transfer verbatim, rather, they must be reorganized to accommodate simultaneously the Euclidean frequency structure and the noncommutative group law.

To the best of our knowledge, a systematic theory of uncertainty principles has not previously been developed for this short-time Fourier transform on the Heisenberg group. The purpose of the present paper is to provide such a theory by treating qualitative, quantitative, entropic, moment-based, local, and decay-based forms of uncertainty principles within a common representation-theoretic framework. Our principal results may be summarized as follows.
\begin{enumerate}
\item We establish the basic properties of the STFT on $\mathbb{H}^n$, including its orthogonality relation, Plancherel formula, inversion formula, and reproducing kernel structure.

\item We prove qualitative uncertainty principles of two different types. First, we obtain a Benedicks-type support theorem: if both $\operatorname{supp}(V_gf)$ and $\operatorname{supp}(\mathcal{F}(g))$ have finite measure, then either $f=0$ or $g=0$. We also establish a Beurling--Hardy-type theorem in which a Beurling-type integrability condition on $V_gf$ is combined with Gaussian decay of $\mathcal{F}(f)$.

\item We derive quantitative concentration estimates of Donoho--Stark type and prove the Lieb inequality
\[
\int_{\mathbb{H}^n}\int_{\mathbb{H}^n}
   |V_gf(Q,P)|^p\,dQ\,dP
   \leq
   \left(\frac{2}{p}\right)^{2n+1}
   \bigl(\|f\|_2\|g\|_2\bigr)^p,
   \qquad p>2.
\]
This estimate yields a refined lower bound for the measure of any set on which the STFT is substantially concentrated.

\item As consequences of the preceding estimates, we obtain a Hirschman entropy uncertainty principle and Heisenberg-type lower bounds for weighted $L^2$-moments of the STFT. We further establish local uncertainty principles in the spirit of Price. These include estimates in both ranges $0 < \alpha < \mathcal{Q}/2$ and $\alpha>\mathcal{Q}/2$, where $\mathcal{Q}=2n+2$ is the homogeneous dimension of $\mathbb{H}^n$.

\item Finally, we investigate the decay properties of the STFT. We show that extended-Gevrey-type derivative bounds for $\mathcal{F}(f)$ and $\mathcal{F}(g)$ yield estimates for arbitrary polynomial weights in the modulation variable, uniformly when the translation parameter ranges over bounded subsets of $\mathbb{H}^n$.
\end{enumerate}

Taken together, these results show that the principal forms of classical time-frequency uncertainty persist in the Heisenberg setting, but their formulation and proofs reflect the hybrid geometry of the Dynin--Folland representation. The main contribution of the paper is therefore not simply a collection of separate analogues, but rather a unified theory of uncertainty in which support, concentration, entropy, moments, local behavior, and decay are connected through the same noncommutative time-frequency transform.

This paper is organized as follows. In Section \ref{sec 2}, we recall the necessary background on the Heisenberg and Dynin--Folland groups, introduce the short-time Fourier transform associated with the relevant projective representation, and establish its fundamental properties. Section \ref{sec Benedicks} contains an analogue of Benedicks’ theorem. Sections \ref{sec:donoho-stark} and \ref{sec Lieb} are devoted to the Donoho--Stark and Lieb uncertainty principles, respectively. In Section \ref{sec Hirschman}, we derive a Hirschman entropy inequality as a consequence of the Lieb estimate. In Section \ref{sec BeurlingHardy}, we establish a Beurling--Hardy-type uncertainty principle. In Section \ref{sec Heisenberg}, we prove Heisenberg-type inequality and local uncertainty principles of Price. Finally, Section \ref{sec:decay} investigates the decay properties of the short-time Fourier transform under extended-Gevrey-type derivative assumptions.\\

Throughout this article, the notation $A \lesssim B$ means that there exists a constant $c>0,$ depending only on the relevant parameters, such that $A \leq cB$.


\section{Preliminaries}\label{sec 2}

As discussed in the introduction, Fischer et al.~\cite{rottensteiner-heisenberg_modulation_space} introduced and studied the weighted modulation spaces $E^{p,q}_s(\mathbb{R}^{2n+1})$ associated with the Heisenberg group. These spaces are defined in terms of the matrix coefficients of a square-integrable representation of the Dynin--Folland group $\mathbb{H}_{n,2}$. The construction is analogous to that of the classical modulation spaces $M^{p,q}(\mathbb{R}^n)$, which are defined via the Schr\"odinger representation of the Heisenberg group $\mathbb{H}^n$. Their approach extends the classical theory by exploiting the representation-theoretic structure of the Dynin--Folland group and provides a natural framework for the study of time-frequency analysis on the Heisenberg group.

In this section, we recall the basic definitions and properties of the Heisenberg group, its Schr\"odinger representations and the Dynin--Folland group. We also discuss the square-integrable representation of the Dynin--Folland group that underlies the definition of the modulation spaces $E^{p,q}(\mathbb{R}^{2n+1})$.

\subsection{The Heisenberg group}
Let \(\mathbb{H}^n = \mathbb{C}^n \times \mathbb{R} \cong \mathbb{R}^n \times \mathbb{R}^n \times \mathbb{R}\) denote the \((2n+1)\)-dimensional Heisenberg group, equipped with the group law 
$$(p,q,t)(p',q',t')=\left(p+p',q+q',t+t'+\frac{1}{2}(pq'-qp') \right),\;\forall \, (p,q,t),(p',q',t') \in \mathbb{H}^n,$$
where $pq$ is the usual Euclidean dot product of $p,q \in \mathbb{R}^n$. The Heisenberg group is a two-step nilpotent and noncommutative Lie group whose well-developed representation theory plays a fundamental role in harmonic analysis and time-frequency analysis. For detailed expositions of the representation theory of $\mathbb{H}^n$, we refer the reader to \cite{fisher-ruzhansky-book, rottensteiner-thesis, rottensteiner-ruzhansky-harmonic-oscillators, thangaveluheisenberg}. 

\noindent We denote by \(\mathfrak{h}_n,\) the Heisenberg algebra whose basis consists of the left-invariant vector fields 
\begin{equation*}
    X_{p_i}=\frac{\partial}{\partial p_i}-\frac{1}{2}q_i\frac{\partial}{\partial t},\; X_{q_i}=\frac{\partial}{\partial q_i}+\frac{1}{2}p_i\frac{\partial}{\partial t},\; T= \frac{\partial}{\partial t},\quad i=1,2,\dots , n.
\end{equation*}
For each \(\lambda \in \mathbb{R}\setminus \{0\}\), the Heisenberg group admits an irreducible unitary representation \(\rho_{\lambda}\) on \(L^2(\mathbb{R}^{n})\), given by
 $$
 \left(\rho_\lambda(p,q,t)\varphi\right)(\xi)=e^{2\pi i\lambda t}e^{2\pi i \lambda (q \xi+\frac{1}{2}p q) }\varphi(\xi+p),
 $$
for \(\varphi \in L^2(\mathbb{R}^n)\). These representations are referred to as the Schrödinger representations of the Heisenberg group. By the Stone--von Neumann theorem, every irreducible unitary representation of \(\mathbb{H}^n\) that is nontrivial on the centre is unitary equivalence to a Schrödinger representation $\rho_\lambda$ for some $\lambda \neq 0$.

An alternative realization of the Heisenberg group, which is particularly useful in the present setting, is obtained through differential and multiplication operators acting on the Schwartz space \(S(\mathbb{R}^n)\); see \cite{rottensteiner-heisenberg_modulation_space}. More precisely, the canonical commutation relations satisfied by these operators generate the \((2n+1)\)-dimensional Heisenberg Lie algebra. This realization provides a natural connection between the representation theory of the Heisenberg group and the analysis of the differential operators and it will be useful in the construction of the Dynin--Folland group. 

\subsection{The Dynin--Folland group}

We now recall the Dynin--Folland group and its associated Lie algebra. Following \cite{rottensteiner-heisenberg_modulation_space}, consider the real Lie algebra
\[
\left\langle
2\pi i\,\mathcal{D}_{p_j},
2\pi i\,\mathcal{D}_{q_k},
2\pi i\,\mathcal{D}_{t},
2\pi i\,\mathcal{X}_{p_l},
2\pi i\,\mathcal{X}_{q_m},
2\pi i\,\mathcal{X}_{t}
\right\rangle
\]
generated by the left-invariant vector fields
\begin{equation}\label{first set of operators}
\left\{
\begin{aligned}
\mathcal{D}_{p_j}
&:= (2\pi i)^{-1}dR(X_{p_j})
= (2\pi i)^{-1}\left(\frac{\partial}{\partial p_j}
-\frac{1}{2}q_j\frac{\partial}{\partial t}\right),\\
\mathcal{D}_{q_k}
&:= (2\pi i)^{-1}dR(X_{q_k})
= (2\pi i)^{-1}\left(\frac{\partial}{\partial q_k}
+\frac{1}{2}p_k\frac{\partial}{\partial t}\right),\\
\mathcal{D}_{t}
&:= (2\pi i)^{-1}dR(X_t)
= (2\pi i)^{-1}\frac{\partial}{\partial t},
\end{aligned}
\right.
\end{equation}
together with the multiplications by coordinate functions
\begin{equation}\label{multiplication operators}
\left\{
\begin{aligned}
\mathcal{X}_{p_l}f(p,q,t)
&= p_lf(p,q,t),\\
\mathcal{X}_{q_m}f(p,q,t)
&= q_mf(p,q,t),\\
\mathcal{X}_{t}f(p,q,t)
&= tf(p,q,t),
\end{aligned}
\right.
\end{equation}
where $j,k,l,m=1,\ldots,n$ and $f\in\mathcal{S}(\mathbb{H}^n)$. The commutation relations between these differential and multiplication operators generate a 3-step nilpotent Lie algebra. More precisely, the real Lie algebra
\[
\left\langle
2\pi i\,\mathcal{D}_{p_j},
2\pi i\,\mathcal{D}_{q_k},
2\pi i\,\mathcal{D}_{t},
2\pi i\,\mathcal{X}_{p_l},
2\pi i\,\mathcal{X}_{q_m},
2\pi i\,\mathcal{X}_{t}
\right\rangle
\]
generated by the operators \eqref{first set of operators} and \eqref{multiplication operators} equals
\[
\begin{aligned}
&\mathbb{R}\,2\pi i\,\mathcal{D}_{p_1}\oplus\cdots\oplus
\mathbb{R}\,2\pi i\,\mathcal{D}_{p_n}\oplus
\mathbb{R}\,2\pi i\,\mathcal{D}_{q_1}\oplus\cdots\oplus
\mathbb{R}\,2\pi i\,\mathcal{D}_{q_n}\oplus
\mathbb{R}\,2\pi i\,\mathcal{D}_{t}\\
&\qquad\oplus\;
\mathbb{R}\,2\pi i\,\mathcal{X}_{p_1}\oplus\cdots\oplus
\mathbb{R}\,2\pi i\,\mathcal{X}_{p_n}\oplus
\mathbb{R}\,2\pi i\,\mathcal{X}_{q_1}\oplus\cdots\oplus
\mathbb{R}\,2\pi i\,\mathcal{X}_{q_n}\oplus
\mathbb{R}\,2\pi i\,\mathcal{X}_{t}\oplus
\mathbb{R}\,2\pi i\,I,
\end{aligned}
\]
where \(I\) denotes the identity operator on \(L^2(\mathbb{H}^n).\) This Lie algebra is $3$-step nilpotent and has topological dimension $2(2n+1)+1 = 4n+3$.

To describe the corresponding abstract Lie algebra, let
\[
(X_{u_1},\ldots,X_{u_n},
X_{v_1},\ldots,X_{v_n},
X_w,
X_{x_1},\ldots,X_{x_n},
X_{y_1},\ldots,X_{y_n},
X_z,
X_s)
\]
be the standard basis of $\mathbb{R}^{2(2n+1)+1}$. Consider the linear isomorphism
\begin{equation*}
d\pi:\mathbb{R}^{2(2n+1)+1}
\longrightarrow
\left\langle
2\pi i\,\mathcal{D}_{p_j},
2\pi i\,\mathcal{D}_{q_k},
2\pi i\,\mathcal{D}_{t},
2\pi i\,\mathcal{X}_{p_l},
2\pi i\,\mathcal{X}_{q_m},
2\pi i\,\mathcal{X}_{t}
\right\rangle
\end{equation*}
defined by
\[
\begin{aligned}
d\pi(X_{u_j})&=2\pi i\,\mathcal{D}_{p_j},&
d\pi(X_{v_j})&=2\pi i\,\mathcal{D}_{q_j},&
d\pi(X_w)&=2\pi i\,\mathcal{D}_{t},\\
d\pi(X_{x_j})&=2\pi i\,\mathcal{X}_{p_j},&
d\pi(X_{y_j})&=2\pi i\,\mathcal{X}_{q_j},\\
d\pi(X_z)&=2\pi i\,\mathcal{X}_{t},&
d\pi(X_s)&=2\pi i\,I.
\end{aligned}
\]

\begin{defi}
The Dynin--Folland Lie algebra $\mathfrak{h}_{n,2}$ is the real Lie algebra whose underlying linear space is $\mathbb{R}^{2(2n+1)+1}$ and whose Lie bracket $[\cdot,\cdot]_{\mathfrak{h}_{n,2}}$ is defined so that $d\pi$ is a Lie algebra morphism. 
\end{defi}
\noindent An important property of the Dynin--Folland Lie algebra $\mathfrak{h}_{n,2}$ is that it contains a natural copy of the Heisenberg Lie algebra $\mathfrak{h}_n$. More precisely, the subalgebra
\[
\langle 2\pi i\,\mathcal{D}_{p_j},\, 2\pi i\,\mathcal{D}_{q_k},\, 2\pi i\,\mathcal{D}_{t}\rangle
\]
is isomorphic to \(\mathfrak{h}_n\) and so is the subalgebra
\[
\mathbb{R}X_{u_1}\oplus\cdots\oplus\mathbb{R}X_{u_n}\oplus\mathbb{R}X_{v_1}
\oplus\cdots\oplus\mathbb{R}X_{v_n}\oplus\mathbb{R}X_{w}.
\]

The Dynin--Folland group \(\mathbb{H}_{n,2}\) is the connected, simply connected Lie group whose Lie algebra is isomorphic to $\mathfrak{h}_{n,2}$. Thus, after identifying the group with its Lie algebra via the exponential map, the group law can be expressed through the Baker--Campbell formula. More precisely, for $\mathbf{X}, \mathbf{X'} \in \mathbb{H}_{n,2}$, we have
\begin{equation*}
    \mathbf{X} \odot_{\mathbb{H}_{n,2}}\mathbf{X'} = \mathbf{Z},
\end{equation*}
\begin{equation*}
    \text{with} \quad \mathbf{Z} := \mathbf{X} + \mathbf{X'} + \frac{1}{2}[\mathbf{X}, \mathbf{X'}]_{\mathfrak{h}_{n,2}} + \frac{1}{12}[(\mathbf{X} - \mathbf{X'}), [\mathbf{X}, \mathbf{X'}]_{\mathfrak{h}_{n,2}}]_{\mathfrak{h}_{n,2}},
\end{equation*}
where \(\odot_{\mathbb{H}_{n,2}}\) denotes the group operation in the Dynin--Folland group.

\noindent Following the notation in \cite{rottensteiner-heisenberg_modulation_space}, we denote an element of the Heisenberg group by \[X=(p,q,t)\in \mathbb{R}^{2n+1},\] and an element of the Dynin--Folland group by \[\mathbf{X}=(P,Q,S)=\left((u,v,w),(x,y,z),s \right)\in \mathbb{R}^{2(2n+1)+1}.\] The inner product on \(\mathbb{R}^{2n+1}\) will be denoted by \(\langle \cdot, \cdot \rangle=\langle \cdot, \cdot \rangle_{\mathbb{R}^{2n+1}}.\)

Fischer et al.~\cite{rottensteiner-heisenberg_modulation_space} provided a complete classification of the unitary irreducible representations of the Dynin--Folland group. These representations fall into four distinct classes. Among them, the following family plays a fundamental role in the construction of modulation spaces on the Heisenberg group.

\noindent For every \(\lambda \in \mathbb{R}\setminus\{0\}\), there exists a unitary representation \(\pi_\lambda\) of the Dynin--Folland group \(\mathbb{H}_{n,2}\) on \(L^2(\mathbb{H}_n)\), given by
    \[
    (\pi_\lambda(P,Q,S)f)(X)
    =
    e^{2\pi i\lambda\left(S+\left\langle Q,\,X\cdot \left(\tfrac12 P\right)\right\rangle\right)}
    \,f(X\cdot P),
    \]
    for every \((P,Q,S)\in \mathbb{H}_{n,2}\), \(X\in \mathbb{H}_n\) and \(f\in L^2(\mathbb{H}_n)\), where $X \cdot P$ denotes the usual product in $\mathbb{H}^n$.

\noindent Moreover, if \(\lambda,\lambda'\in\mathbb{R}\setminus\{0\}\) with \(\lambda\neq\lambda'\), then the representations \(\pi_\lambda\) and \(\pi_{\lambda'}\) are unitarily inequivalent.

Apart from characterizing the unitarily equivalent representations of the Dynin--Folland group, Fischer et al.~ also described its projective unitary representations. In particular, they showed that the projective kernel associated with the representation \(\pi_{\lambda}\) coincides with the center $Z(\mathbb{H}_{n,2})$ of the Dynin--Folland group. Consequently, the projective representation corresponding to \(\pi_\lambda\) is given by
\[
\left(\pi^{\mathrm{pr}}_\lambda(P,Q)\psi\right)(X)
=
e^{2\pi i\lambda\left\langle Q,\,
X\cdot\left(\tfrac{1}{2}P\right)\right\rangle}
\,\psi(X\cdot P).
\]
Following \cite[Lemma 5.3]{rottensteiner-heisenberg_modulation_space}, the Dynin--Folland group \(\mathbb{H}_{n,2}\) can be realized as the semidirect product
\[
\mathbb{H}_{n,2}\cong \mathbb{R}^{2n+2}\rtimes \mathbb{H}_n.
\]
More precisely, every element of \(\mathbb{H}_{n,2}\) can be uniquely written in the form
\[
((Q,S),P):=(0,Q,S)\odot_{\mathbb{H}_{n,2}}(P,0,0),
\]
where \(P\in\mathbb{H}_n\), \(Q\in\mathbb{R}^{2n+1}\) and \(S\in\mathbb{R}\). In these coordinates, the representation \(\pi_\lambda\) takes the form
\[
\left(\pi_\lambda((Q,S),P)\psi\right)(X)
=
e^{2\pi i\lambda(S+\langle Q,X\rangle)}
\psi(X\cdot P).
\]
The Haar measure on \(\mathbb{H}_{n,2}\) in these coordinates is
\[
d\mu_{\mathbb{H}_{n,2}}((Q,S),P)=dS\,dQ\,dP.
\]
Furthermore, the associated projective representation is given by
\[
\bigl(\pi_\lambda^{\mathrm{pr}}(Q,P)\psi\bigr)(X)
=
e^{2\pi i\lambda\langle Q,X\rangle}
\psi(X\cdot P).
\]

Before introducing modulation spaces on the Heisenberg group, we recall the representation-theoretic interpretation of the classical modulation spaces on \(\mathbb{R}^n\). Fischer et al.~gave a detailed account of the realization of classical modulation spaces through the Schr\"odinger representation of the Heisenberg group. More precisely, they showed that the short-time Fourier transform can be interpreted as the transform associated with the Schr\"odinger representation, while the corresponding modulation space norm is obtained by equipping this transform with a suitable mixed Lebesgue norm. This representation-theoretic viewpoint provides a natural framework for extending the notion of modulation spaces beyond the Euclidean setting.

Following the same philosophy, Fisher et al.~\cite{rottensteiner-heisenberg_modulation_space} used a projective representation of the Dynin--Folland group to introduce a corresponding family of modulation spaces on the Heisenberg group, referred to as the Heisenberg-modulation spaces. We recall the precise definition below.

\begin{defi}
Let \(1\leq p,q\leq\infty\) and let \(g\in\mathcal{S}(\mathbb{R}^{2n+1})\). The nonweighted Heisenberg-modulation space \(E^{p,q}(\mathbb{R}^{2n+1})\) is defined by
\[
E^{p,q}(\mathbb{R}^{2n+1})
:=
\left\{
f\in\mathcal{S}'(\mathbb{R}^{2n+1})
:\,
\|f\|_{E^{p,q}(\mathbb{R}^{2n+1})}<\infty
\right\},
\]
where
\[
\|f\|_{E^{p,q}(\mathbb{R}^{2n+1})}
:=
\bigl\|
\langle f,\check{\pi}^{\mathrm{pr}}g\rangle
\bigr\|_{L^{p,q}\!\left(\mathbb{H}_{n,2}/Z(\mathbb{H}_{n,2})\right)}.
\]
\end{defi}
\noindent Here, \[\check{\pi}^{\mathrm{pr}}=\mathcal{F}^{-1} \pi^{\mathrm{pr}}\mathcal{F},\] and \[\bigl\|
\langle f,\check{\pi}^{\mathrm{pr}}g\rangle
\bigr\|_{L^{p,q}\!\left(\mathbb{H}_{n,2}/Z(\mathbb{H}_{n,2})\right)}\] denotes the corresponding mixed Lebesgue norm.

\noindent Thus, the quantity \[\langle f,\check{\pi}^{\mathrm{pr}}g\rangle\] can be regarded as an analogue of \textit{the short-time Fourier transform in the setting of the Heisenberg group}. To make this analogy more explicit, we introduce the translation and modulation operators on $L^2(\mathbb{H}^n)$ by
$$T_Pf(X) = f(X \cdot P), \qquad M_Qf(X) = e^{2\pi i \langle Q,X \rangle},$$
for $f \in L^2(\mathbb{H}^n)$ and $P,Q,X \in \mathbb{H}^n$. 

\noindent Using the relation
$$\check{\pi}^{\mathrm{pr}}(Q,P) = \mathcal{F}^{-1}\pi^{\mathrm{pr}}(Q,P)\mathcal{F},$$
the short-time Fourier transform on the Heisenberg group can be expressed as
    $$V_gf(Q,P) =\langle f,\check{\pi}^{\mathrm{pr}}(Q,P)g\rangle =\langle \mathcal{F}(f),\pi^{\mathrm{pr}}(Q,P)\mathcal{F}(g) \rangle.$$
    Consequently,
    $$V_gf(Q,P) = \int_{\mathbb{R}^{2n+1}}\mathcal{F}(f)(X) \overline{e^{2\pi i \langle Q,X \rangle} \mathcal{F}(g)(X \cdot P)}\,dX = \langle \mathcal{F}(f), M_Q T_P\mathcal{F}(g)\rangle.$$
    Equivalently, 
    $$V_gf(Q,P) = \int_{\mathbb{R}^{2n+1}}\mathcal{F}(f)(X) \overline{T_P\mathcal{F}(g)(X)} e^{-2\pi i \langle Q,X \rangle} \,dX = \mathcal{F}\left(\mathcal{F}(f)\,T_P\overline{\mathcal{F}(g)} \right)(Q).$$

The following proposition summarizes the fundamental properties of the short-time Fourier transform on the Heisenberg group.

\begin{prop}
The short-time Fourier transform satisfies the following properties.
\begin{enumerate}
    \item \textbf{Orthogonality relation.}
    For every $f_1,f_2,g_1,g_2\in L^2(\mathbb{H}^n)$,
    \begin{equation}\label{eq:orthogonality-relation}
        \bigl\langle V_{g_1}f_1,V_{g_2}f_2\bigr\rangle_
        {L^2(\mathbb{H}^n\times\mathbb{H}^n)}
        =
        \langle f_1,f_2\rangle_{L^2(\mathbb{H}^n)}
        \langle g_2,g_1\rangle_{L^2(\mathbb{H}^n)}.
    \end{equation}

    \item \textbf{Plancherel formula.}
    For every $f,g\in L^2(\mathbb{H}^n)$,
    \begin{equation}\label{eq:plancherel-formula}
        \|V_gf\|_{L^2(\mathbb{H}^n\times\mathbb{H}^n)}
        =
        \|f\|_{L^2(\mathbb{H}^n)}
        \|g\|_{L^2(\mathbb{H}^n)}.
    \end{equation}

    \item \textbf{Inversion formula.}
    Let $g,h\in L^2(\mathbb{H}^n)$ satisfy
    $\langle h,g\rangle_{L^2(\mathbb{H}^n)}\neq 0$. Then, for every
    $f\in L^2(\mathbb{H}^n)$,
    \begin{equation}\label{eq:inversion-formula}
        \mathcal{F}(f)
        =
        \frac{1}{\langle h,g\rangle_{L^2(\mathbb{H}^n)}}
        \int_{\mathbb{R}^{2n+1}}
        \int_{\mathbb{R}^{2n+1}}
        V_gf(Q,P)\,M_QT_P\mathcal{F}(h)\,dQ\,dP,
    \end{equation}
    where the integral is understood in the weak sense.
\end{enumerate}
\end{prop}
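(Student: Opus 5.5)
The plan is to use the representation $V_gf(Q,P)=\mathcal{F}\bigl(\mathcal{F}(f)\,T_P\overline{\mathcal{F}(g)}\bigr)(Q)$. This reduces everything to Euclidean Plancherel in the modulation variable $Q$, followed by invariance of Haar measure on $\mathbb{H}^n$ in the translation variable $P$.

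\textbf{Orthogonality relation.} First take $f_1,f_2,g_1,g_2\in\mathcal{S}(\mathbb{R}^{2n+1})$ and set $F_i=\mathcal{F}(f_i)$, $G_i=\mathcal{F}(g_i)$. For fixed $P$, the function $F_i\,T_P\overline{G_i}$ lies in $L^1\cap L^2(\mathbb{R}^{2n+1})$. Euclidean Parseval in $Q$ then gives
\[
\int_{\mathbb{R}^{2n+1}}V_{g_1}f_1(Q,P)\overline{V_{g_2}f_2(Q,P)}\,dQ=\int_{\mathbb{R}^{2n+1}}F_1(X)\overline{F_2(X)}\,\overline{G_1(X\cdot P)}G_2(X\cdot P)\,dX .
\]
Next integrate in $P$ and apply Fubini, which is justified because $|F_1F_2|\,|G_1(X\cdot P)G_2(X\cdot P)|$ is integrable in $(X,P)$. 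For fixed $X$, the substitution $P\mapsto X^{-1}\cdot P$ is measure-preserving, since Lebesgue measure on $\mathbb{R}^{2n+1}$ is the bi-invariant Haar measure of $\mathbb{H}^n$. It gives $\int G_2(X\cdot P)\overline{G_1(X\cdot P)}\,dP=\langle G_2,G_1\rangle$. Hence the total equals $\langle F_1,F_2\rangle\langle G_2,G_1\rangle$, and unitarity of $\mathcal{F}$ yields \eqref{eq:orthogonality-relation} on Schwartz functions.

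\textbf{Extension to $L^2$.} The identity with $f_1=f_2$, $g_1=g_2$ shows that $(f,g)\mapsto V_gf$ is a bounded sesquilinear map from $\mathcal{S}\times\mathcal{S}$ into $L^2(\mathbb{H}^n\times\mathbb{H}^n)$, with norm $\|f\|_2\|g\|_2$. On the other hand, for $f,g\in L^2$ the pointwise definition $V_gf(Q,P)=\langle \mathcal{F}(f),M_QT_P\mathcal{F}(g)\rangle$ is continuous in $(f,g)$ for each fixed $(Q,P)$. Therefore the density extension agrees almost everywhere with the pointwise definition, and the orthogonality relation holds for all $f_i,g_i\in L^2$ by continuity.

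Alternatively, the density argument can be avoided. For fixed $P$, the product $F\,T_P\overline{G}$ lies in $L^1$, and Plancherel applies once one checks that it also lies in $L^2$ for a.e.\ $P$. This follows from Tonelli: $\int\!\!\int |F(X)|^2|G(X\cdot P)|^2\,dX\,dP=\|F\|_2^2\|G\|_2^2<\infty$. This is the step needing most care, and the Tonelli argument settles it. The Plancherel formula \eqref{eq:plancherel-formula} is then the special case $f_1=f_2=f$, $g_1=g_2=g$.

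\textbf{Inversion formula.} Define the weak integral $\Phi$ by its action on $\varphi\in L^2$:
\[
\langle\Phi,\varphi\rangle=\int\!\!\int V_gf(Q,P)\,\langle M_QT_P\mathcal{F}(h),\varphi\rangle\,dQ\,dP .
\]
Since $\langle M_QT_P\mathcal{F}(h),\varphi\rangle=\overline{\langle \varphi,M_QT_P\mathcal{F}(h)\rangle}=\overline{V_h(\mathcal{F}^{-1}\varphi)(Q,P)}$, this equals $\langle V_gf,V_h\mathcal{F}^{-1}\varphi\rangle$. The integral converges absolutely by Cauchy--Schwarz together with the Plancherel formula, so $\Phi$ is a bounded functional. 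By orthogonality,
\[
\langle V_gf,V_h\mathcal{F}^{-1}\varphi\rangle=\langle f,\mathcal{F}^{-1}\varphi\rangle\langle h,g\rangle=\langle \mathcal{F}(f),\varphi\rangle\langle h,g\rangle .
\]
Dividing by $\langle h,g\rangle\neq0$ gives \eqref{eq:inversion-formula} in the weak sense.
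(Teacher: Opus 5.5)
Your proof is correct and follows essentially the same route as the paper. It applies fiberwise Euclidean Plancherel in $Q$ via $V_gf(\cdot,P)=\mathcal{F}\bigl(\mathcal{F}(f)\,T_P\overline{\mathcal{F}(g)}\bigr)$, then Fubini with translation invariance of Lebesgue (Haar) measure in $P$, obtains Plancherel as the diagonal case, and gets inversion by pairing against a test function and invoking orthogonality. Your Tonelli check that $\mathcal{F}(f)\,T_P\overline{\mathcal{F}(g)}\in L^1\cap L^2$ for a.e.\ $P$ (or the density alternative) usefully makes explicit a step that the paper's proof applies without comment.
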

\begin{proof}
We denote
\[
F_j=\mathcal{F}(f_j)
\qquad\text{and}\qquad
G_j=\mathcal{F}(g_j),
\qquad j=1,2.
\]
Then for each $P\in\mathbb{H}^n$, we have
\[
V_{g_j}f_j(\cdot,P)
=
\mathcal{F}\bigl(F_j \, T_P\overline{G_j}\bigr).
\]
Hence, by the Plancherel theorem for the Euclidean Fourier transform,
\begin{align*}
\bigl\langle V_{g_1}f_1,V_{g_2}f_2\bigr\rangle
&=
\int_{\mathbb{H}^n}
\left\langle
V_{g_1}f_1(\cdot,P),
V_{g_2}f_2(\cdot,P)
\right\rangle\,dP                                          \\
&=
\int_{\mathbb{H}^n}
\left\langle
\mathcal{F}\bigl(F_1 \,T_P\overline{G_1}\bigr),
\mathcal{F}\bigl(F_2 \, T_P\overline{G_2}\bigr)
\right\rangle\,dP                                          \\
&=
\int_{\mathbb{H}^n}
\left\langle
F_1 \, T_P\overline{G_1},
F_2 \, T_P\overline{G_2}
\right\rangle\,dP                                          \\
&=
\int_{\mathbb{R}^{2n+1}}
\int_{\mathbb{R}^{2n+1}}
F_1(Q)\overline{F_2(Q)}
\overline{G_1(Q\cdot P)}G_2(Q\cdot P)
\,dQ\,dP.
\end{align*}
Since the Haar measure on $\mathbb{H}^n$ is right invariant, the
change of variables $R=Q\cdot P$, followed by Fubini's theorem,
gives
\begin{align*}
\bigl\langle V_{g_1}f_1,V_{g_2}f_2\bigr\rangle
&=
\left(
\int_{\mathbb{R}^{2n+1}}
F_1(Q)\overline{F_2(Q)}\,dQ
\right)
\left(
\int_{\mathbb{R}^{2n+1}}
\overline{G_1(R)}G_2(R)\,dR
\right)                                                     \\
&=
\langle F_1,F_2\rangle\langle G_2,G_1\rangle                \\
&=
\langle f_1,f_2\rangle\langle g_2,g_1\rangle.
\end{align*}
This proves the orthogonality relation.
    \noindent \textit{(2)} follows immediately from \textit{(1)} by taking
$f_1=f_2=f$ and $g_1=g_2=g$.

\noindent
To prove the inversion formula, define
\[
\widetilde{F}
:=
\frac{1}{\langle h,g\rangle}
\int_{\mathbb{R}^{2n+1}}
\int_{\mathbb{R}^{2n+1}}
V_gf(Q,P)\,M_QT_P\mathcal{F}(h)\,dQ\,dP,
\]
where the integral is understood in the weak sense. For any
$\gamma\in L^2(\mathbb{H}^n)$, the orthogonality relation yields
\begin{align*}
\left\langle \widetilde{F},\mathcal{F}(\gamma)\right\rangle
&=
\frac{1}{\langle h,g\rangle}
\int_{\mathbb{R}^{2n+1}}
\int_{\mathbb{R}^{2n+1}}
V_gf(Q,P)\,
\overline{
\left\langle
\mathcal{F}(\gamma),M_QT_P\mathcal{F}(h)
\right\rangle}
\,dQ\,dP                                                   \\
&=
\frac{1}{\langle h,g\rangle}
\int_{\mathbb{R}^{2n+1}}
\int_{\mathbb{R}^{2n+1}}
V_gf(Q,P)\,\overline{V_h\gamma(Q,P)}\,dQ\,dP               \\
&=
\frac{1}{\langle h,g\rangle}
\left\langle V_gf,V_h\gamma\right\rangle                   \\
&=
\frac{1}{\langle h,g\rangle}
\langle f,\gamma\rangle\langle h,g\rangle                  \\
&=
\langle f,\gamma\rangle
=
\left\langle\mathcal{F}(f),\mathcal{F}(\gamma)\right\rangle.
\end{align*}
Since $\gamma\in L^2(\mathbb{H}^n)$ is arbitrary and $\mathcal{F}$ is
unitary, it follows that
\[
\widetilde{F}=\mathcal{F}(f).
\]
This proves the inversion formula.
\end{proof}

As in the case of the short-time Fourier transform on \(\mathbb{R}^n\), an application of the Cauchy--Schwarz inequality yields 
\[|V_gf(Q,P)|\leq \lVert \mathcal{F}(f)\rVert_2 \lVert M_QT_P \mathcal{F}(g) \rVert_2 =\lVert \mathcal{F}(f)\rVert_2 \lVert \mathcal{F}(g)\rVert_2=\lVert f\rVert_2 \lVert g\rVert_2.  \]
Consequently, 
\begin{equation}\label{inf norm of Vgf}
\lVert V_gf \rVert_{\infty}\leq \lVert f\rVert_2 \lVert g\rVert_2. 
\end{equation}

For a nonzero window function \(g \in L^2(\mathbb{H}^n)\), the following lemma shows that the image of \(L^2(\mathbb{H}^n)\) under the transform \(V_g\) forms a reproducing kernel Hilbert space.

\begin{lemma}\label{reproducing kernel lemma}
Let $g\in L^2(\mathbb{H}^n)$ be nonzero. Then
$V_g(L^2(\mathbb{H}^n))$ is a reproducing kernel Hilbert space with
reproducing kernel
\begin{equation} \label{reproducing kernel} 
\begin{aligned}
K_g\bigl((Q',P'),(Q,P)\bigr)
&=
\frac{1}{\|g\|_2^2}
V_g\bigl(\check{\pi}^{\mathrm{pr}}(Q,P)g\bigr)(Q',P') \\
&=
\frac{1}{\|g\|_2^2}
V_g\!\left(
\mathcal{F}^{-1}
\bigl(M_QT_P\mathcal{F}(g)\bigr)
\right)(Q',P').
\end{aligned}
\end{equation}
Moreover,
\[
\left|K_g\bigl((Q',P'),(Q,P)\bigr)\right|\leq 1.
\]
\end{lemma}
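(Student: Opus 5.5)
The plan is to follow the Euclidean argument for the classical STFT, using only the orthogonality relation \eqref{eq:orthogonality-relation}, the Plancherel formula \eqref{eq:plancherel-formula} and the Cauchy--Schwarz bound \eqref{inf norm of Vgf}.

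\emph{Step 1: the range is a Hilbert space with bounded evaluations.} By \eqref{eq:plancherel-formula}, the map $f\mapsto \|g\|_2^{-1}V_gf$ is an isometry from $L^2(\mathbb{H}^n)$ into $L^2(\mathbb{H}^n\times\mathbb{H}^n)$. Its image $V_g(L^2(\mathbb{H}^n))$ is therefore a closed subspace, hence a Hilbert space with the inherited inner product. Each element $F=V_gf$ of this range is a genuine, everywhere defined function: $V_gf(Q,P)$ is an inner product for every $(Q,P)$. Since $f$ is determined by $F$ through the isometry, we have $\|f\|_2=\|F\|_2/\|g\|_2$. Combining this with \eqref{inf norm of Vgf} gives
\[
|F(Q,P)|\le \|f\|_2\|g\|_2=\|F\|_2 .
\]
So point evaluations are bounded, and the range is a reproducing kernel Hilbert space.

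\emph{Step 2: identify the kernel.} Fix $(Q,P)$ and set $g_{Q,P}:=\check{\pi}^{\mathrm{pr}}(Q,P)g=\mathcal{F}^{-1}(M_QT_P\mathcal{F}(g))$. This function lies in $L^2$ with $\|g_{Q,P}\|_2=\|g\|_2$, because $M_Q$ and $T_P$ are unitary (right translation preserves Haar measure) and $\mathcal F$ is unitary. Define $K_{(Q,P)}:=\|g\|_2^{-2}V_g g_{Q,P}$, which lies in the range. For $F=V_gf$, the orthogonality relation with $g_1=g_2=g$ gives
\[
\langle F, K_{(Q,P)}\rangle=\|g\|_2^{-2}\langle f,g_{Q,P}\rangle\langle g,g\rangle=\langle f,g_{Q,P}\rangle=V_gf(Q,P).
\]
The last equality is just the definition of $V_g$. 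Since $\langle F,K_{(Q,P)}\rangle=\int F(Q',P')\overline{K_{(Q,P)}(Q',P')}\,dQ'\,dP'$, this exhibits the kernel in the stated form \eqref{reproducing kernel}. Uniqueness of reproducing kernels then completes the identification.

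\emph{Step 3: the bound.} By \eqref{inf norm of Vgf},
\[
|K_g((Q',P'),(Q,P))|\le \|g\|_2^{-2}\|g_{Q,P}\|_2\|g\|_2=1 .
\]

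The only delicate point is bookkeeping of conjugation and argument order in the kernel. I would check that the stated formula matches the convention $F(Q,P)=\int F\,\overline{K(\cdot,(Q,P))}$, or the equivalent Hermitian-symmetric form $K((Q',P'),(Q,P))=\overline{K((Q,P),(Q',P'))}$. This symmetry follows from $V_g g_{Q,P}(Q',P')=\langle g_{Q,P},g_{Q',P'}\rangle$. Everything else is a direct consequence of the earlier proposition.
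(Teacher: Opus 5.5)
Your proposal is correct and follows essentially the same route as the paper: the isometry $\|g\|_2^{-1}V_g$ gives a closed range, the orthogonality relation with $g_1=g_2=g$ and $f_2=\check{\pi}^{\mathrm{pr}}(Q,P)g$ gives the reproducing identity, and the Cauchy--Schwarz bound together with the unitarity of $\check{\pi}^{\mathrm{pr}}(Q,P)$ gives $|K_g|\leq 1$. The only differences are cosmetic: you check boundedness of point evaluations directly from $|V_gf(Q,P)|\leq\|V_gf\|_2$, whereas the paper reads it off the reproducing identity, and you state explicitly that $V_gf$ has a canonical everywhere-defined representative, which the paper leaves implicit.
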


\begin{proof}
By the Plancherel formula,
\[
\|V_gf\|_2=\|g\|_2\|f\|_2,
\qquad f\in L^2(\mathbb{H}^n).
\]
Since $g\neq0$, the operator $\|g\|_2^{-1}V_g$ is an isometry.
Consequently, $V_g(L^2(\mathbb{H}^n))$ is a closed subspace of
$L^2(\mathbb{H}^n\times\mathbb{H}^n)$.

For every $f\in L^2(\mathbb{H}^n)$, the orthogonality relation gives
\begin{align*}
V_gf(Q,P)
&=
\left\langle
f,\check{\pi}^{\mathrm{pr}}(Q,P)g
\right\rangle                                             \\
&=
\frac{1}{\|g\|_2^2}
\left\langle
V_gf,
V_g\bigl(\check{\pi}^{\mathrm{pr}}(Q,P)g\bigr)
\right\rangle                                             \\
&=
\left\langle
V_gf,
K_g\bigl((\cdot,\cdot),(Q,P)\bigr)
\right\rangle.
\end{align*}
Thus, point evaluation is continuous on
$V_g(L^2(\mathbb{H}^n))$, and \eqref{reproducing kernel} is its
reproducing kernel.

Finally, using
\[
|V_gu(Q',P')|\leq \|u\|_2\|g\|_2
\]
and the unitarity of $\check{\pi}^{\mathrm{pr}}(Q,P)$, we obtain
\begin{align*}
\left|K_g\bigl((Q',P'),(Q,P)\bigr)\right|
&\leq
\frac{1}{\|g\|_2^2}
\left\|\check{\pi}^{\mathrm{pr}}(Q,P)g\right\|_2
\|g\|_2                                                   \\
&=1.
\end{align*}
This completes the proof.
\end{proof}


\section{Benedicks' Theorem}\label{sec Benedicks}

In this section, we establish an analogue of Benedicks' theorem for the short-time Fourier transform on the Heisenberg group. Recall that the classical Benedicks theorem asserts that a nonzero function and its Euclidean Fourier transform cannot both be supported on sets of finite Lebesgue measure. In the present setting, for each fixed $P\in\mathbb{H}^n$, the transform can be written as
\[
V_gf(Q,P)
=
\mathcal{F}\left(
\mathcal{F}(f)\,T_P\overline{\mathcal{F}(g)}
\right)(Q).
\]
This representation allows us to apply the classical Benedicks theorem to the function
\[
\mathcal{F}(f)\,T_P\overline{\mathcal{F}(g)}
\]
for almost every $P$. The finite-support assumption on $\mathcal{F}(g)$ controls the support of this function, while the finite-support assumption on $V_gf$ controls the support of its Euclidean Fourier transform. Combining these observations with the invariance of Haar measure under Heisenberg translations yields the following desired qualitative uncertainty principle.

\begin{theorem}\label{benedicks}
    Suppose that $f,g \in L^2(\mathbb{H}^n)$ satisfy $$|\operatorname{supp}(V_gf)| < \infty \quad \text{and} \quad |\operatorname{supp}(\mathcal{F}(g))|<\infty.$$ Then either \(f=0\) or \(g=0\).
\end{theorem}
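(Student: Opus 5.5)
The plan is to reduce to the classical Benedicks theorem on $\mathbb{R}^{2n+1}$, applied slice by slice in $P$. For each $P\in\mathbb{H}^n$ set
\[
\Phi_P:=\mathcal{F}(f)\,T_P\overline{\mathcal{F}(g)},
\qquad\text{so that}\qquad
V_gf(\cdot,P)=\mathcal{F}(\Phi_P).
\]
Since $\mathcal{F}(f),\mathcal{F}(g)\in L^2$, each $\Phi_P$ lies in $L^1(\mathbb{R}^{2n+1})$. The identity above holds for every $P$ as a Fourier integral of an $L^1$ function, so there are no a.e. issues in defining the slice.

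\textbf{Supports of the two pieces.} First I control the support of $\Phi_P$. We have
\[
\operatorname{supp}(\Phi_P)\subseteq \operatorname{supp}(T_P\mathcal{F}(g))=\operatorname{supp}(\mathcal{F}(g))\cdot P^{-1}.
\]
By right invariance of Lebesgue (Haar) measure on $\mathbb{H}^n$, this set has measure $|\operatorname{supp}(\mathcal{F}(g))|<\infty$ for every $P$. Second, by Fubini–Tonelli applied to the indicator of $\operatorname{supp}(V_gf)$,
\[
\int_{\mathbb{H}^n}\bigl|\{Q: V_gf(Q,P)\neq0\}\bigr|\,dP\le|\operatorname{supp}(V_gf)|<\infty .
\]
Hence for almost every $P$ the slice $\mathcal{F}(\Phi_P)=V_gf(\cdot,P)$ is supported on a set of finite measure.

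\textbf{Applying Benedicks.} For such $P$, the function $\Phi_P$ and its Fourier transform are both supported on sets of finite measure. The classical Benedicks theorem therefore gives $\Phi_P=0$ a.e., which is the same as $V_gf(\cdot,P)=0$. Thus $V_gf=0$ a.e. on $\mathbb{H}^n\times\mathbb{H}^n$. The Plancherel formula \eqref{eq:plancherel-formula} then gives $\|f\|_2\|g\|_2=0$, so $f=0$ or $g=0$.

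\textbf{Main obstacle.} The delicate point is measure-theoretic bookkeeping. Benedicks is usually stated for $L^1$ (or $L^2$) functions with supports understood up to null sets, and $\operatorname{supp}(V_gf)$ should be read as the set where $V_gf\neq0$. I would note that $V_gf$ is continuous in $Q$ for each fixed $P$, since it is the Fourier transform of an $L^1$ function. It is in fact jointly continuous, by continuity of translation in $L^2$. Consequently, finiteness of the measure of $\{V_gf\neq0\}$ passes to a.e. slices through Tonelli without ambiguity.

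No Heisenberg structure is needed beyond the right invariance of Haar measure, which is what makes $|\operatorname{supp}(T_P\mathcal{F}(g))|$ independent of $P$.
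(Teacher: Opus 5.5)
Your proof is correct and takes essentially the same route as the paper. Both arguments use the fiberwise identity $V_gf(\cdot,P)=\mathcal{F}\bigl(\mathcal{F}(f)\,T_P\overline{\mathcal{F}(g)}\bigr)$, right invariance of Haar measure to control the support of $T_P\overline{\mathcal{F}(g)}$ for every $P$, finite-measure support of almost every slice of $V_gf$, and the classical Benedicks theorem on $\mathbb{R}^{2n+1}$. The differences are cosmetic: you work with $L^1$ slices, you make the Tonelli step explicit, and you finish with the Plancherel formula \eqref{eq:plancherel-formula} instead of redoing the Fubini/right-invariance computation $\int\!\int|\mathcal{F}(f)(X)\,\mathcal{F}(g)(X\cdot P)|^2\,dX\,dP=\|\mathcal{F}(f)\|_2^2\|\mathcal{F}(g)\|_2^2$, which is the same calculation that proves that formula.
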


    \begin{proof}
       Assume that \(f,g \in L^2(\mathbb{H}^n)\) satisfy $$|\operatorname{supp}(V_gf)| < \infty \quad \text{and} \quad |\operatorname{supp}(\mathcal{F}(g))|<\infty.$$
       If \(g=0\), then there is nothing to prove. Hence, we may assume that $g \neq 0$. Let $G = \mathcal{F}(g)$.
       
       \noindent For every $Q,P \in \mathbb{H}^n$, we have
       \begin{equation}
           V_gf(Q,P) = \langle \mathcal{F}(f), \pi^{pr}(Q,P) \mathcal{F}(g) \rangle =\mathcal{F}\left(\mathcal{F}(f)\,T_P\overline{G}\right)(Q). \label{vgf in terms of h_p} 
       \end{equation}
    Since
\(
V_gf\in L^2(\mathbb H^n\times\mathbb H^n),
\) it follows that
\[
\int_{\mathbb{R}^{2n+1}}|V_gf(Q,P)|^2\,dQ<\infty,
\qquad\text{for a.e. }P.
\]
Using \eqref{vgf in terms of h_p}, we obtain 
\[
\int_{\mathbb{R}^{2n+1}}
\left|
\mathcal{F}\left(\mathcal{F}(f)\,T_P\overline{G}\right)(Q)
\right|^2
\,dQ<\infty,
\qquad\text{for a.e. }P.
\]
An application of the Plancherel theorem therefore yields
\[
\int_{\mathbb R^{2n+1}}
\left|
(\mathcal{F}(f)\,T_P\overline{G})(X)
\right|^2
\,dX<\infty,
\qquad\text{for a.e. }P,
\]
and hence
\[
\mathcal{F}(f)\,T_P\overline{G}\in
L^2(\mathbb R^{2n+1}),
\qquad\text{for a.e. }P.
\]
Moreover,
    $$\operatorname{supp}\left(\mathcal{F}(f)\, T_P\overline{G}\right) \subseteq \operatorname{supp} \left(\mathcal{F}(f)\right) \cap \operatorname{supp} \left(T_P\overline{G}\right) \subseteq \operatorname{supp} \left(T_P\overline{G}\right).$$
     Since \(|\operatorname{supp}(G)| = |\operatorname{supp}(\mathcal{F}(g))|<\infty\), it follows that \(|\operatorname{supp}(T_P\overline{G})|<\infty\) for every (and hence for almost every) $P$. Consequently, $$|\operatorname{supp}(\mathcal{F}(f)\, T_P\overline{G})|< \infty, \qquad \text{for a.e. } P.$$
Furthermore, by \eqref{vgf in terms of h_p},
$$\mathcal{F}\left(\mathcal{F}(f)\,T_P\overline{G}\right) = V_gf(\cdot,P).$$
Since $|\operatorname{supp}(V_gf)| < \infty$, we deduce that
$$|\operatorname{supp}(\mathcal{F}(\mathcal{F}(f)\,T_P\overline{G}))|<\infty, \qquad \text{for a.e. } P.$$
By applying Benedicks' theorem to the function \(\mathcal{F}(f)\,T_P\overline{G} \in L^2(\mathbb{R}^{2n+1})\), it follows that $$\mathcal{F}(f)\,T_P\overline{G}=0, \qquad \text{for almost every } P \in \mathbb{H}^n.$$
Integrating with respect to $X$ and $P$ and applying Fubini's theorem, we obtain
     \begin{align*}
        0 &=\int_{\mathbb{R}^{2n+1}} \int_{\mathbb{R}^{2n+1}} |(\mathcal{F}(f)\,T_P\overline{G})(X)|^2\,dX\,dP\\
        &= \int_{\mathbb{R}^{2n+1}} |\mathcal{F}(f)(X)|^2 \left(\int_{\mathbb{R}^{2n+1}} |\overline{G(X \cdot P)}|^2\,dP \right)\, dX\\
        &= \int_{\mathbb{R}^{2n+1}} |\mathcal{F}(f)(X)|^2 \left(\int_{\mathbb{R}^{2n+1}} |G(P)|^2\,dP \right)\, dX\\
        &=\lVert \mathcal{F}(f) \rVert_2^2 \, \lVert G \rVert_2^2.
     \end{align*}
     Since \(G = \mathcal{F}(g) \neq 0\), we have $\|G\|_2 > 0$. Therefore, $\| \mathcal{F}(f)\|_2 = 0$ and hence $f= 0$ by the Plancherel theorem. This completes the proof.
    \end{proof}


\section{Donoho--Stark Uncertainty Principle}\label{sec:donoho-stark}

In contrast to the qualitative support theorem established in the preceding section, the Donoho--Stark uncertainty principle provides a quantitative restriction on the concentration of the short-time Fourier transform. More precisely, it gives a lower bound for the measure of a phase-space region on which a prescribed proportion of the $L^2$-energy of $V_gf$ is concentrated. Our approach uses the reproducing kernel structure of the range of $V_g$ and the interaction between the corresponding range projection and a phase-space localization operator.

Let $P_g$ denote the orthogonal projection from $L^2(\mathbb{H}^n\times\mathbb{H}^n)$ onto the closed subspace $V_g(L^2(\mathbb{H}^n))$. For a measurable subset $\Omega\subseteq\mathbb{H}^n\times\mathbb{H}^n$, let $P_\Omega$ denote the orthogonal projection defined by
\[
P_\Omega F:=\chi_\Omega F,
\qquad
F\in L^2(\mathbb{H}^n\times\mathbb{H}^n).
\]
The key observation is that, when $\Omega$ has finite measure, the localized projection $P_\Omega P_g$ is a Hilbert--Schmidt operator whose norm is determined by $|\Omega|$. This operator estimate leads
directly to lower bounds for the portion of $V_gf$ lying outside $\Omega$ and subsequently, to the Donoho--Stark concentration inequalities.

\begin{prop}
    Let \(g\in L^2(\mathbb{H}^n)\) be nonzero. Then, for every measurable subset \(\Omega \subseteq \mathbb{H}^n \times \mathbb{H}^n\) of finite measure, the operator \(P_\Omega P_g\) is a Hilbert--Schmidt operator satisfying
    \[\lVert P_\Omega P_g \rVert_{HS}= \sqrt{|\Omega}|. \]
\end{prop}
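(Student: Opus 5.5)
The strategy is to write $P_\Omega P_g$ as an integral operator on $L^2(\mathbb{H}^n\times\mathbb{H}^n)$ with an explicit kernel, and then to compute the $L^2$ norm of that kernel. The kernel comes from Lemma \ref{reproducing kernel lemma}: $V_g(L^2(\mathbb{H}^n))$ is a reproducing kernel Hilbert space with kernel $K_g$. The orthogonal projection onto it therefore acts by
\[
P_gF(Q',P')=\int_{\mathbb{H}^n}\int_{\mathbb{H}^n}F(Q,P)\,K_g\bigl((Q',P'),(Q,P)\bigr)\,dQ\,dP .
\]
To justify this, first check it on $F\in V_g(L^2)$, where it is exactly the reproducing property. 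Then check it on $F$ in the orthogonal complement: there the right-hand side is $\langle F,K_g(\cdot,(Q',P'))\rangle$ up to conjugation conventions, and since $K_g(\cdot,(Q',P'))\in V_g(L^2)$, this vanishes. Consequently $P_\Omega P_g$ is the integral operator with kernel $\chi_\Omega(Q',P')\,K_g((Q',P'),(Q,P))$.

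Next I use that an integral operator is Hilbert--Schmidt exactly when its kernel is square integrable, with the HS norm equal to the $L^2$ norm of the kernel. This gives
\[
\|P_\Omega P_g\|_{HS}^2=\int_\Omega\left(\int_{\mathbb{H}^n}\int_{\mathbb{H}^n}\bigl|K_g\bigl((Q',P'),(Q,P)\bigr)\bigr|^2\,dQ\,dP\right)dQ'\,dP'.
\]
The inner integral is evaluated by the Plancherel formula \eqref{eq:plancherel-formula}. By conjugate symmetry of the kernel, $|K_g((Q',P'),(Q,P))|=|K_g((Q,P),(Q',P'))|$, and the latter equals $\|g\|_2^{-2}|V_g(\check{\pi}^{\mathrm{pr}}(Q',P')g)(Q,P)|$. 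The Plancherel formula then gives
\[
\int\!\!\int |K_g|^2\,dQ\,dP=\|g\|_2^{-4}\,\|\check{\pi}^{\mathrm{pr}}(Q',P')g\|_2^2\,\|g\|_2^2=1,
\]
using unitarity of $\check{\pi}^{\mathrm{pr}}$, i.e.\ that $M_QT_P$ is unitary because Haar measure is right invariant. Integrating this over $\Omega$ yields $\|P_\Omega P_g\|_{HS}^2=|\Omega|$, which is finite by hypothesis, so the operator is Hilbert--Schmidt with the stated norm.

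The main obstacle is the first step: confirming the kernel representation of $P_g$, in particular its symmetry $K_g((Q',P'),(Q,P))=\overline{K_g((Q,P),(Q',P'))}$. This follows from
\[
V_g(\check{\pi}^{\mathrm{pr}}(Q,P)g)(Q',P')=\langle \check{\pi}^{\mathrm{pr}}(Q,P)g,\check{\pi}^{\mathrm{pr}}(Q',P')g\rangle ,
\]
which is a Hermitian expression in the two points. Measurability of the kernel in both variables is also needed for Fubini, and it holds because $K_g$ is continuous as a matrix coefficient of a strongly continuous representation.
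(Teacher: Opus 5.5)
Your proposal is correct and follows essentially the same route as the paper: you represent $P_g$ as the integral operator with kernel $K_g$ from Lemma~\ref{reproducing kernel lemma}, identify $\|P_\Omega P_g\|_{HS}$ with the $L^2$ norm of $\chi_\Omega K_g$, and evaluate the inner integral as $1$ using the Plancherel formula and the unitarity of $\check{\pi}^{\mathrm{pr}}$. The only difference is where the cut-off sits. You place $\chi_\Omega$ on the output variable and therefore need the Hermitian symmetry of $K_g$, which you verify. The paper places $\chi_\Omega$ on the parameter variable and integrates over the other variable directly, and it simply asserts the integral representation of $P_g$, which you also justify.
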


\begin{proof}
     By Lemma \ref{reproducing kernel lemma}, the space $V_g(L^2(\mathbb{H}^n))$ is a reproducing kernel Hilbert space with reproducing kernel $K_g$. Consequently, the orthogonal projection $P_g$ is the integral operator with kernel $K_g$. Hence, the operator \(P_\Omega P_g\) is an integral operator whose kernel is 
    \begin{equation*}
        \mathcal{N}((Q',P'),(Q,P))=\chi_\Omega(Q,P) K_g((Q',P'),(Q,P)),
    \end{equation*}
    where \(K_g\) is given by \eqref{reproducing kernel}.

    \noindent Therefore,
    \begin{align*}
        \lVert P_\Omega P_g \rVert^2_{HS}&=\lVert \mathcal{N}\rVert^2_2\\
        &=\int_{\mathbb{H}^n \times \mathbb{H}^n} \int_{\mathbb{H}^n \times \mathbb{H}^n} |\chi_{\Omega}(Q,P)|^2 |K_g((Q',P'),(Q,P))|^2\,dQ'\,dP'\,dQ\,dP\\
        &=\frac{1}{\lVert g \rVert_2^4}\int_{\mathbb{H}^n \times \mathbb{H}^n} |\chi_{\Omega}(Q,P)|^2 \left( \int_{\mathbb{H}^n \times \mathbb{H}^n} \lvert V_g(\mathcal{F}^{-1}(M_QT_P \mathcal{F}(g)))(Q',P')\rvert^2\,dQ'\,dP' \right)\,dQ\,dP\\
        &=\frac{1}{\lVert g \rVert_2^4}\int_{\mathbb{H}^n \times \mathbb{H}^n} |\chi_{\Omega}(Q,P)|^2 \lVert g \rVert_2^2 \lVert M_QT_P\mathcal{F}(g)\rVert_2^2\,dQ\,dP\\
        &=\int_{\mathbb{H}^n \times \mathbb{H}^n} |\chi_{\Omega}(Q,P)|^2\,dQ\,dP =\lvert \Omega \rvert,
    \end{align*}
    which completes the proof.
\end{proof}

In the following proposition, we prove that the short-time Fourier transform cannot be concentrated on a measurable set of sufficiently small measure.

\begin{prop}\label{stft concentration on set of small measure}
    Let \(g \in L^2(\mathbb{H}^n)\) be a nonzero function and let \(\Omega \subseteq \mathbb{H}^n \times \mathbb{H}^n\) be a measurable set satisfying \(\lvert \Omega \rvert<1.\) Then, for every \(f \in L^2(\mathbb{H}^n)\),
    \[\lVert \chi_{\Omega^c}V_gf \rVert_2\geq \sqrt{1-\lvert \Omega \rvert} \, \lVert f \rVert_2\lVert g\rVert_2.\]
\end{prop}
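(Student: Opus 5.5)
The plan is to combine the Hilbert--Schmidt estimate of the preceding proposition with the Plancherel formula \eqref{eq:plancherel-formula}. Fix $f \in L^2(\mathbb{H}^n)$ and set $F := V_gf$. Then $F$ lies in the range $V_g(L^2(\mathbb{H}^n))$, so $P_g F = F$. Consequently
\[
\chi_\Omega F = P_\Omega P_g F .
\]
The first step is to bound the operator norm by the Hilbert--Schmidt norm:
\[
\|P_\Omega P_g\|_{op} \le \|P_\Omega P_g\|_{HS} = \sqrt{|\Omega|}.
\]
This gives
\[
\|\chi_\Omega F\|_2 \le \sqrt{|\Omega|}\,\|F\|_2 .
\]

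The second step uses the orthogonal splitting $F = \chi_\Omega F + \chi_{\Omega^c} F$. The two pieces have disjoint supports, so
\[
\|F\|_2^2 = \|\chi_\Omega F\|_2^2 + \|\chi_{\Omega^c}F\|_2^2 .
\]
Combining this with the first step yields
\[
\|\chi_{\Omega^c} F\|_2^2 \ge (1-|\Omega|)\,\|F\|_2^2 .
\]
By the Plancherel formula \eqref{eq:plancherel-formula}, $\|F\|_2 = \|f\|_2\|g\|_2$. Since $|\Omega| < 1$, we may take square roots, which gives the claimed inequality.

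There is no serious obstacle here. The only point needing care is the justification that $P_g V_g f = V_g f$ and that the operator norm is dominated by the Hilbert--Schmidt norm. Both are standard: the first holds because $V_g(L^2(\mathbb{H}^n))$ is closed (Lemma \ref{reproducing kernel lemma}) and $P_g$ is the orthogonal projection onto it. Note also that the hypothesis $|\Omega| < 1$ is used only at the final square root; the intermediate bound $\|\chi_\Omega V_g f\|_2 \le \sqrt{|\Omega|}\,\|f\|_2\|g\|_2$ holds for every $\Omega$ of finite measure.
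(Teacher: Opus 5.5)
Your argument is correct, but it takes a different route from the paper's. The paper does not invoke the Hilbert--Schmidt proposition here. It bounds $\|\chi_\Omega V_gf\|_2^2 \le |\Omega|\,\|V_gf\|_\infty^2$ and then applies the pointwise Cauchy--Schwarz estimate \eqref{inf norm of Vgf}, $\|V_gf\|_\infty \le \|f\|_2\|g\|_2$. After that it uses the orthogonal splitting and Plancherel exactly as you do.

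Both routes give the same intermediate bound $\|\chi_\Omega V_gf\|_2 \le \sqrt{|\Omega|}\,\|f\|_2\|g\|_2$, and they rest on the same fact. The identity $\|P_\Omega P_g\|_{HS}^2 = |\Omega|$ comes from each kernel section $K_g(\cdot,(Q,P))$ having unit $L^2$-norm. That is exactly the statement that point evaluation on $V_g(L^2(\mathbb{H}^n))$ has norm $1$, which is the sup-norm bound.

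The paper's version is more elementary. It needs only Cauchy--Schwarz and the unitarity of $\check{\pi}^{\mathrm{pr}}(Q,P)$, not the reproducing-kernel structure or the identification of $P_g$ as an integral operator.

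Your version actually uses the Hilbert--Schmidt proposition that the section sets up, and it phrases the estimate as a bound on an operator acting on all of $L^2(\mathbb{H}^n\times\mathbb{H}^n)$. Its advantage is that it yields the sharper inequality $\|\chi_{\Omega^c}V_gf\|_2 \ge \sqrt{1-\|P_\Omega P_g\|_{op}^2}\,\|f\|_2\|g\|_2$. Here $\|P_\Omega P_g\|_{op}$ can be strictly smaller than $\|P_\Omega P_g\|_{HS}=\sqrt{|\Omega|}$, and the inequality remains meaningful even when $|\Omega|\ge 1$, provided $\|P_\Omega P_g\|_{op}<1$. The paper's sup-norm argument cannot reach this refinement.
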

\begin{proof}
    For any \(f\in L^2(\mathbb{H}^n)\), we have 
    \begin{align*}
\lVert V_gf \rVert_2^2
&=
\left\lVert
\chi_{\Omega}V_gf+\chi_{\Omega^c}V_gf
\right\rVert_{2}^2
\\
&=
\left\lVert
\chi_{\Omega}V_gf
\right\rVert_{2}^2
+
\left\lVert
\chi_{\Omega^c}V_gf
\right\rVert_{2}^2
\\
&\le
\lvert \Omega \rvert\,
\lVert V_gf \rVert_{\infty}^2
+
\left\lVert
\chi_{\Omega^c}V_gf
\right\rVert_{2}^2
\\
&\le
\lvert \Omega \rvert\,
\lVert f\rVert_{2}^2
\lVert g\rVert_{2}^2
+
\left\lVert
\chi_{\Omega^c}V_gf
\right\rVert_{2}^2.
\end{align*}
    Invoking the Plancherel formula \eqref{eq:plancherel-formula}, we obtain
    \[\lVert \chi_{\Omega^c}V_gf \rVert_2^2 \geq (1-\lvert \Omega \rvert) \, \lVert f \rVert_2^2 \lVert g\rVert_2^2,\]
    and taking the square roots yields the desired inequality.
\end{proof}

Let \(\Omega\) be a measurable subset of \(\mathbb{H}^n \times \mathbb{H}^n\) and \(f,g\) in \(L^2(\mathbb{H}^n)\). Assume that \(0 < \epsilon_\Omega < 1\). We say that \(V_gf\) is \(\epsilon_{\Omega}\)-concentrated on \(\Omega\) if 
 \begin{equation}\label{epsilon omega concentrated}
    \|\chi_{\Omega^{c}} V_gf\|_{2} \le \epsilon_{\Omega} \|V_gf\|_{2}.
    \end{equation}

We now establish the Donoho--Stark uncertainty principle for the short-time Fourier transform on the Heisenberg group.

\begin{theorem}
    Let \(f,g \in L^2(\mathbb{H}^n)\) be nonzero functions. Suppose that \(\Omega \subseteq \mathbb{H}^n \times \mathbb{H}^n\) is measurable and that \(0 < \epsilon_\Omega < 1\). If \(V_gf\) is \(\epsilon_{\Omega}\)-concentrated on \(\Omega\), then 
    \[\lvert \Omega \rvert \geq (1-\epsilon_{\Omega}^2).\]
\end{theorem}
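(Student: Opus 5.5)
The plan is to combine the $\epsilon_\Omega$-concentration hypothesis with the sup-norm bound \eqref{inf norm of Vgf} and the Plancherel formula \eqref{eq:plancherel-formula}, in the same way as in the proof of Proposition \ref{stft concentration on set of small measure}. An alternative route goes through the Hilbert--Schmidt estimate $\|P_\Omega P_g\|_{HS}=\sqrt{|\Omega|}$. We may assume $|\Omega|<\infty$, since otherwise there is nothing to prove.

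\textbf{Step 1.} Split the $L^2$-energy of $V_gf$ into the parts on $\Omega$ and on $\Omega^c$. By Plancherel,
\[
\|f\|_2^2\|g\|_2^2=\|V_gf\|_2^2=\|\chi_\Omega V_gf\|_2^2+\|\chi_{\Omega^c}V_gf\|_2^2 .
\]
The concentration assumption \eqref{epsilon omega concentrated} bounds the second term by $\epsilon_\Omega^2\|f\|_2^2\|g\|_2^2$. Hence
\[
\|\chi_\Omega V_gf\|_2^2\ge (1-\epsilon_\Omega^2)\|f\|_2^2\|g\|_2^2 .
\]

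\textbf{Step 2.} Bound the left-hand side from above by $|\Omega|\,\|V_gf\|_\infty^2$. By \eqref{inf norm of Vgf}, this is at most $|\Omega|\,\|f\|_2^2\|g\|_2^2$.

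\textbf{Step 3.} Since $f$ and $g$ are nonzero, we can divide by $\|f\|_2^2\|g\|_2^2>0$. This gives $|\Omega|\ge 1-\epsilon_\Omega^2$.

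Equivalently, one could argue by contradiction. If $|\Omega|<1-\epsilon_\Omega^2<1$, then Proposition \ref{stft concentration on set of small measure} gives $\|\chi_{\Omega^c}V_gf\|_2\ge\sqrt{1-|\Omega|}\,\|V_gf\|_2$. This is strictly larger than $\epsilon_\Omega\|V_gf\|_2$, because $\|V_gf\|_2>0$, and that contradicts \eqref{epsilon omega concentrated}.

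There is no real obstacle. The only care points are the nondegeneracy $\|V_gf\|_2=\|f\|_2\|g\|_2\neq0$, which is needed for the division or the strict inequality, and the reduction to $|\Omega|<\infty$. If one prefers the operator-theoretic route, write $V_gf=P_gV_gf$. Then
\[
\|\chi_\Omega V_gf\|_2\le\|P_\Omega P_g\|_{op}\|V_gf\|_2\le\sqrt{|\Omega|}\,\|V_gf\|_2,
\]
and the same conclusion follows from Step 1.
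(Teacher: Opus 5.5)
Your proof is correct and essentially matches the paper's. Like the paper, you split $\|V_gf\|_2^2$ over $\Omega$ and $\Omega^c$, use the concentration hypothesis and Plancherel to get $\|\chi_\Omega V_gf\|_2^2\ge(1-\epsilon_\Omega^2)\|f\|_2^2\|g\|_2^2$, bound this from above by $|\Omega|\,\|f\|_2^2\|g\|_2^2$ via \eqref{inf norm of Vgf}, and divide; your alternative routes (by contradiction via Proposition \ref{stft concentration on set of small measure}, or via $\|P_\Omega P_g\|_{op}\le\|P_\Omega P_g\|_{HS}=\sqrt{|\Omega|}$) are also valid but not needed.
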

\begin{proof}
    By the decomposition,
    $$V_gf = \chi_\Omega V_gf + \chi_{\Omega^c} V_gf$$
    and using \eqref{epsilon omega concentrated}, we obtain
    \begin{align*}
        \lVert V_gf \rVert_2^2&=\lVert \chi_{\Omega}V_gf \rVert_2^2+\lVert \chi_{\Omega^c}V_gf \rVert_2^2\\
        &\leq \lVert \chi_{\Omega}V_gf \rVert_2^2+\epsilon^2_{\Omega}\lVert V_gf \rVert_2^2.
    \end{align*}
    Hence, 
    \begin{equation}\label{boundindonoho}
        (1-\epsilon^2_{\Omega})\lVert V_gf \rVert_2^2 \leq \|\chi_\Omega V_gf\|_2^2.
    \end{equation}
    Using Plancherel formula \eqref{eq:plancherel-formula}, we obtain 
    \begin{align*}
        (1-\epsilon^2_{\Omega})\lVert f \rVert_2^2 \lVert g \rVert_2^2 &\leq \lVert \chi_{\Omega}V_gf \rVert_2^2\\
        &\le \lvert \Omega \rvert \, \lVert \chi_{\Omega}V_gf \rVert^2_{\infty}\\
        &\le \lvert \Omega \rvert \, \lVert f \rVert_2^2 \lVert g \rVert_2^2.
    \end{align*}
    Since $f$ and $g$ are nonzero, it follows that \[\lvert \Omega \rvert \geq (1-\epsilon_{\Omega}^2),\]
    thereby completing the proof.
\end{proof}

Recall that 
$$\|V_gf\|_2 = \|f\|_2 \|g\|_2 \quad \text{and} \quad \|V_gf\|_\infty \leq \|f\|_2 \|g\|_2.$$
Therefore, by the Riesz--Thorin interpolation theorem, it follows that
\begin{equation}\label{rieszthorin}
\|V_gf\|_p \leq \|f\|_2 \|g\|_2, \qquad \text{for every } p \geq 2.
\end{equation}
By applying H\"{o}lder's inequality with conjugate exponents $p/2$ and $p/(p-2)$ to \eqref{boundindonoho} and then using \eqref{rieszthorin} for $p > 2$, we obtain the following result.

\begin{corollary}
    Let \(f,g \in L^2(\mathbb{H}^n)\) be nonzero functions. Suppose that \(\Omega \subseteq \mathbb{H}^n \times \mathbb{H}^n\) is measurable and that \(0 < \epsilon_\Omega < 1\). If \(V_gf\) is \(\epsilon_{\Omega}\)-concentrated on \(\Omega\), then for every \(p>2\), 
    \[\lvert \Omega \rvert \geq (1-\epsilon_{\Omega}^2)^{\frac{p}{p-2}}.\]
\end{corollary}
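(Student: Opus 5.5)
The plan follows the hint preceding the statement: start from inequality \eqref{boundindonoho}, which was derived in the proof of the Donoho--Stark theorem using only the $\epsilon_\Omega$-concentration hypothesis \eqref{epsilon omega concentrated}. It reads
\[
(1-\epsilon_\Omega^2)\,\|V_gf\|_2^2 \le \|\chi_\Omega V_gf\|_2^2 .
\]
By the Plancherel formula \eqref{eq:plancherel-formula}, the left-hand side equals $(1-\epsilon_\Omega^2)\|f\|_2^2\|g\|_2^2$.

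Next, estimate the right-hand side with H\"older's inequality, taking conjugate exponents $p/2$ and $p/(p-2)$ and writing $|\chi_\Omega V_gf|^2=\chi_\Omega\cdot|V_gf|^2$:
\[
\|\chi_\Omega V_gf\|_2^2=\int_{\mathbb{H}^n\times\mathbb{H}^n}\chi_\Omega\,|V_gf|^2
\le |\Omega|^{\frac{p-2}{p}}\,\|V_gf\|_p^2 .
\]
Now apply \eqref{rieszthorin}, namely $\|V_gf\|_p\le\|f\|_2\|g\|_2$ for $p>2$, to get
\[
(1-\epsilon_\Omega^2)\|f\|_2^2\|g\|_2^2\le |\Omega|^{\frac{p-2}{p}}\|f\|_2^2\|g\|_2^2 .
\]

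Since $f,g\neq0$, we can divide by $\|f\|_2^2\|g\|_2^2>0$, which gives $|\Omega|^{(p-2)/p}\ge 1-\epsilon_\Omega^2$. The exponent $p/(p-2)$ is positive and $t\mapsto t^{p/(p-2)}$ is increasing on $[0,\infty)$, so raising both sides to this power yields the claim. If $|\Omega|=\infty$ there is nothing to prove, so we may assume $|\Omega|<\infty$.

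I expect no real obstacle, since every step is an instance of an earlier estimate. The only point to check is \eqref{rieszthorin}. The paper obtains it by interpolating between $L^2$ and $L^\infty$, but here $V_gf$ is a single function, not a linear operator in a varying argument. It is cleaner to justify it directly by log-convexity of $L^p$ norms:
\[
\|V_gf\|_p^p\le\|V_gf\|_\infty^{p-2}\|V_gf\|_2^2\le(\|f\|_2\|g\|_2)^p .
\]
This uses \eqref{inf norm of Vgf} for the $L^\infty$ factor and Plancherel for the $L^2$ factor. I would include this one-line justification to make the argument self-contained.
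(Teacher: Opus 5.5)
Your proof is correct and follows the paper's own argument. The paper applies H\"older's inequality with exponents $p/2$ and $p/(p-2)$ to \eqref{boundindonoho}, then uses \eqref{rieszthorin} and Plancherel, and then divides by $\|f\|_2^2\|g\|_2^2$ and raises to the power $p/(p-2)$, exactly as you do. Your direct justification of \eqref{rieszthorin} via $\|V_gf\|_p^p\le\|V_gf\|_\infty^{p-2}\|V_gf\|_2^2$ is cleaner than the paper's appeal to Riesz--Thorin, since only a single function is being interpolated.
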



\section{Lieb's Uncertainty Principle}\label{sec Lieb}

In this section, we establish Lieb's inequality, a fundamental result that provides \(L^p\)-norm estimates for the short-time Fourier transform. The proof relies on the sharp Hausdorff--Young inequality on $\mathbb{R}^{2n+1}$ together with the sharp Young convolution inequality on $\mathbb{H}^n$, thereby highlighting the close interplay between Fourier analysis and convolution estimates.
 
 \begin{theorem}
     Let \(f,g \in L^2(\mathbb{H}^n)\) and \(p>2\). Then
\begin{equation}\label{resultlieb}
    \int_{\mathbb{H}^n} \int_{\mathbb{H}^n}\left|V_gf(Q,P)\right|^p dQ\, dP
\leq
\left(\frac{2}{p}\right)^{2n+1}
\left(\|f\|_2 \, \|g\|_2\right)^p.
\end{equation}
 \end{theorem}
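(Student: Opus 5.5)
Write $F=\mathcal{F}(f)$ and $G=\mathcal{F}(g)$. For each fixed $P$ we have $V_gf(\cdot,P)=\mathcal{F}(F\,T_P\overline{G})$. The plan follows Lieb's classical argument: apply the sharp Hausdorff--Young inequality in the modulation variable $Q$, and then the sharp Young inequality on $\mathbb{H}^n$ after integrating in the translation variable $P$.

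\textbf{Step 1 (Hausdorff--Young in $Q$).} Fix $P$ and let $p'$ be the conjugate exponent of $p>2$. The sharp Hausdorff--Young inequality of Babenko--Beckner on $\mathbb{R}^{2n+1}$ gives
\[
\|V_gf(\cdot,P)\|_{p}\le A_{p'}^{2n+1}\,\|F\,T_P\overline{G}\|_{p'},\qquad A_r=\bigl(r^{1/r}/r'^{1/r'}\bigr)^{1/2}.
\]
Raising to the power $p$ and integrating in $P$ yields
\[
\iint|V_gf|^p\,dQ\,dP\le A_{p'}^{(2n+1)p}\int_{\mathbb{H}^n}\Bigl(\int|F(X)|^{p'}|G(X\cdot P)|^{p'}\,dX\Bigr)^{p/p'}dP.
\]

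\textbf{Step 2 (rewrite as a convolution on $\mathbb{H}^n$).} Put $u=|F|^{p'}$ and $v=|G|^{p'}$. Substituting $X\mapsto X^{-1}$, which preserves Haar measure, the inner integral equals $\int u(X^{-1})\,v(X^{-1}P)\,dX=(\tilde u * v)(P)$, where $\tilde u(X)=u(X^{-1})$ and $*$ is Heisenberg convolution. The right-hand side is therefore $A_{p'}^{(2n+1)p}\|\tilde u*v\|_{p/p'}^{p/p'}$. Since $F,G\in L^2$, both $u$ and $v$ lie in $L^{2/p'}$, and $\|u\|_{2/p'}=\|F\|_2^{p'}$.

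\textbf{Step 3 (sharp Young on $\mathbb{H}^n$).} Apply the sharp Young inequality $\|\phi*\psi\|_r\le (A_aA_bA_{r'})^{N}\|\phi\|_a\|\psi\|_b$ with $a=b=2/p'$ and $r=p/p'$, so that $1/a+1/b=1+1/r$. For the constant one can use either the Klein--Russo/Beckner result that the Euclidean sharp constant persists on nilpotent groups with $N=2n+1$, or the trivial constant $1$ if that suffices. Then verify the arithmetic: $A_{p'}^{2(2n+1)}\cdot\bigl[(A_{2/p'})^2A_{(p/p')'}\bigr]^{(2n+1)p/p'}=(2/p)^{2n+1}$. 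This is the same computation as in Lieb's Euclidean proof, since the exponents are identical, so I expect it to go through unchanged. Together with $\|f\|_2=\|F\|_2$, this gives the stated inequality.

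\textbf{Main obstacle.} The main difficulty is justifying the sharp Young constant on the noncommutative group $\mathbb{H}^n$. Without the Euclidean sharp constant, Step 3 only gives constant $1$, which loses the factor $(2/p)^{2n+1}$. The first thing I would try is to cite Klein--Russo (sharp Young inequality on the Heisenberg group, with constant equal to the Euclidean one in dimension $2n+1$). Failing that, I would use Beckner's tensorization and rearrangement argument on nilpotent groups, in which the group law only shears the central variable and therefore preserves the Gaussian extremizer computation. A secondary check is the range of exponents: $a=2/p'\in(1,2)$ and $r=p/p'>1$, so we are in the reverse-free range where the sharp Young inequality applies.
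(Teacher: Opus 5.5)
Your proposal is essentially the paper's proof. The paper also applies sharp Hausdorff--Young in $Q$ for a.e.\ fixed $P$, rewrites the $P$-integral as a Heisenberg convolution, and uses the sharp Young inequality on $\mathbb{H}^n$ with exponents $(2/p',2/p',p/p')$ and the Euclidean constant in dimension $2n+1$. The paper cites Christ for that constant; Klein--Russo serves equally well. For your fallback, note that Young's inequality on $\mathbb{H}^n$ has no extremizers, so do not argue via Gaussians; the needed upper bound follows directly from one-dimensional Young in the central variable, then Minkowski, then Young on $\mathbb{R}^{2n}$.

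The only slip is in Step 3: the first factor must be $A_{p'}^{(2n+1)p}$, as in Step 1, not $A_{p'}^{2(2n+1)}$. With that correction and your factor $A_{(p/p')'}$, the product is exactly $(2/p)^{2n+1}$; with the exponent $2(2n+1)$ it is not, for $p\neq 2$.
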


\begin{proof}
    Let \(p'\) denote the H\"older conjugate of \(p\). Since \(f,g \in L^2(\mathbb{H}^n)\), the Plancherel theorem implies that \(\mathcal{F}(f), \mathcal{F}(g) \in L^2(\mathbb{H}^n)\). Hence, by H\"older's inequality, 
    $$\mathcal{F}(f) \, T_P \overline{\mathcal{F}(g)} \in L^1(\mathbb{H}^n)$$
    for every $P \in \mathbb{H}^n$.
    
    \noindent Moreover, 
    $$V_gf(Q,P)=\mathcal{F}\left(\mathcal{F}(f) \, T_P \overline{\mathcal{F}(g)}\right)(Q)\in L^2(\mathbb{H}^n \times \mathbb{H}^n).$$ 
    Therefore, by Fubini's theorem,
    $$\mathcal{F}(f) \, T_P \overline{\mathcal{F}(g)} \in L^1(\mathbb{H}^n) \cap L^2(\mathbb{H}^n)$$
    for almost every \(P \in \mathbb{H}^n\). Since $1 < p' < 2$, the interpolation theorem for Lebesgue spaces implies that
    $$\mathcal{F}(f) \, T_P \overline{\mathcal{F}(g)} \in L^{p'} (\mathbb{H}^n)$$
    for almost every \(P \in \mathbb{H}^n\). Applying the sharp Hausdorff--Young inequality \cite[(1.4)]{Foundations-of-time-frequency-analysis}, we obtain, for almost every fixed $P \in \mathbb{H}^n$,
    \begin{align*}
        \left( \int_{\mathbb{H}^n}\left|V_gf(Q,P)\right|^p\,dQ\right)^{1/p}&=\left( \int_{\mathbb{R}^{2n+1}}\left|V_gf(Q,P)\right|^p dQ\right)^{1/p}\\
        &= \left( \int_{\mathbb{R}^{2n+1}}\left|\mathcal{F}\left( \mathcal{F}(f) T_P \overline{ \mathcal{F}(g)} \right)(Q)\right|^p dQ\right)^{1/p}\\
        &\leq A^{2n+1}_{p'}\left( \int_{\mathbb{R}^{2n+1}}\left| \mathcal{F}(f)(Q) \, \overline{\mathcal{F}(g)(Q \cdot P)}\right|^{p'} dQ\right)^{1/{p'}},
    \end{align*}
    where 
    $$A_{p'} = \left(\frac{(p')^{1/{p'}}}{p^{1/p}}\right)^{1/2}.$$
    Now define
    $$F(Q) = \mathcal{F}(f)(Q) \quad \text{and} \quad G(Q) = \overline{\mathcal{F}(g)(Q^{-1})}.$$
    Then
    \begin{align*}
        \left( \int_{\mathbb{H}^n}\left|V_gf(Q,P)\right|^p\,dQ\right)^{1/p}
        &\leq A^{2n+1}_{p'}\left( \int_{\mathbb{R}^{2n+1}} \left| F(Q) \right|^{p'} \left| G(P^{-1}\cdot Q^{-1} )\right|^{p'} dQ\right)^{1/{p'}}\\
        &= A^{2n+1}_{p'} \left( \left|G\right|^{p'} \ast \left|F\right|^{p'} (P^{-1})\right)^{1/{p'}},
    \end{align*}
    where $\ast$ denotes convolution on the Heisenberg group, defined for suitable functions \(\phi,\;\psi\) on \(\mathbb{H}^n\) by
    \[(\phi \ast \psi)(P)= \int_{\mathbb{H}^n}\phi(P \cdot Q^{-1}) \psi(Q)\,dQ.\]
    Taking the $L^p$-norm with respect to $P$ gives 
    \begin{equation}\label{equation for p norm of Vgf}
        \lVert V_gf \rVert_p \leq A^{2n+1}_{p'} \lVert |G|^{p'} \ast |F|^{p'} \rVert^{1/p'}_{p/p'}.
    \end{equation}
    The next step is to invoke the sharp Young convolution inequality on the Heisenberg group (see \cite{Mchrist-young-convolution}). Applying this inequality with the admissible exponent triple \[\left( \frac{2}{p'},\frac{2}{p'},\frac{p}{p'} \right),\] we obtain 
\[\lVert |G|^{p'} \ast |F|^{p'}\rVert_{p/p'}\leq \left(A^2_{2/p'} A_{p/p'}\right)^{2n+1} \lVert |G|^{p'}\rVert_{2/p'}\, \lVert |F|^{p'}\rVert_{2/p'},\]
where
\[
A_{2/p'}
=
\left[
\left(\frac{2}{p'}\right)^{\frac{p'}{2}}
\left(\frac{2}{2-p'}\right)^{-\frac{2-p'}{2}}
\right]^{\frac12},
\]
and 
\[
A_{p/p'}
=
\left[
\left(\frac{p}{p'}\right)^{\frac{p'}{p}}
\left(\frac{p-1}{p-2}\right)^{-\frac{p-2}{p-1}}
\right]^{\frac12}.
\]
Using the above estimate in \eqref{equation for p norm of Vgf} gives 
\begin{align*}
    \lVert V_gf \rVert_{p} &\leq A^{2n+1}_{p'}\left(\left(A^2_{2/p'} A_{p/p'}\right)^{2n+1}\lVert |G|^{p'}\rVert_{2/p'}\, \lVert |F|^{p'}\rVert_{2/p'}\right)^{1/p'}\\
    &\leq A^{2n+1}_{p'} \left(A^2_{2/p'} A_{p/p'}\right)^{\frac{2n+1}{p'}} \left(\lVert G \rVert^{p'}_{2}\,\lVert F \rVert^{p'}_{2}\right)^{1/p'}\\
     &\leq A^{2n+1}_{p'}\left(A^2_{2/p'} A_{p/p'}\right)^{\frac{2n+1}{p'}}\lVert \mathcal{F}(g)\rVert_{2}\,\lVert \mathcal{F}(f) \rVert_{2}.
\end{align*}
 Substituting the explicit values of the constants and applying the Plancherel theorem yields the desired form of Lieb's inequality.
\end{proof}

\begin{remark}
   It is worth noting that the above proof relies fundamentally on the sharp forms of the Hausdorff--Young inequality on $\mathbb{R}^{2n+1}$ and the Young convolution inequality on $\mathbb{H}^n$. In the Euclidean setting, Gaussian functions are extremizers for both inequalities and hence equality is attained in Lieb's inequality, showing that the inequality is indeed sharp. In contrast, Young's convolution inequality on the Heisenberg group admits no extremizers (see \cite{Mchrist-young-convolution}). Consequently, the above argument does not allow us to conclude that \eqref{resultlieb} is sharp. Moreover, in the Euclidean setting, the proof for the case \(1\leq p\leq2\) relies on the converse of Young's inequality, which to the best of our knowledge is not yet known for the Heisenberg group.
\end{remark}

The following corollary is an immediate consequence of Lieb's inequality and H\"older's identity.

\begin{corollary}
    Let \(f, g \in L^2(\mathbb{H}^n)\). Suppose that \(\Omega \subseteq \mathbb{H}^n \times \mathbb{H}^n\) and \(\epsilon>0\) satisfy 
    \[\iint_{\Omega}\lvert V_gf(Q,P) \rvert^2\,dQ\,dP \geq (1-\epsilon) \lVert f \rVert_2 \lVert g \rVert_2.\]
    Then \[\lvert \Omega \rvert \geq (1-\epsilon)^{\frac{p}{p-2}}\left( \frac{p}{2} \right)^{\frac{2(2n+1)}{p-2}}, \qquad \text{for every } p>2.\]
\end{corollary}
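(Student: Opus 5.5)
The plan is to combine H\"older's inequality on $\Omega$ with Lieb's inequality \eqref{resultlieb}, exactly as in the corollary following the Donoho--Stark theorem, with \eqref{rieszthorin} replaced by the sharper bound \eqref{resultlieb}. First I would fix the normalization. Both sides of the conclusion are scale-invariant only if the hypothesis is read with squared norms, $\iint_\Omega |V_gf|^2 \ge (1-\epsilon)\|f\|_2^2\|g\|_2^2$. By sesquilinearity of $(f,g)\mapsto V_gf$, I may equivalently assume $\|f\|_2=\|g\|_2=1$, in which case the stated hypothesis and the squared version coincide. I would also dispose of the degenerate cases at the start. If $f=0$ or $g=0$, or if $\epsilon\ge 1$, the lower bound is either vacuous or the hypothesis forces nothing, so these are set aside. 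We may also assume $|\Omega|<\infty$, since otherwise there is nothing to prove.

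Next, fix $p>2$ and apply H\"older's inequality on $\Omega$ to $|V_gf|^2$, with conjugate exponents $p/2$ and $p/(p-2)$:
\[
\iint_\Omega |V_gf(Q,P)|^2\,dQ\,dP \le |\Omega|^{\frac{p-2}{p}}\left(\int_{\mathbb{H}^n}\int_{\mathbb{H}^n}|V_gf(Q,P)|^p\,dQ\,dP\right)^{\frac{2}{p}}.
\]
Bounding the last factor by Lieb's inequality \eqref{resultlieb} gives
\[
\iint_\Omega |V_gf|^2 \le |\Omega|^{\frac{p-2}{p}}\left(\frac{2}{p}\right)^{\frac{2(2n+1)}{p}}\|f\|_2^2\|g\|_2^2 .
\]

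Finally, combine this with the concentration hypothesis and divide by $\|f\|_2^2\|g\|_2^2>0$:
\[
1-\epsilon \le |\Omega|^{\frac{p-2}{p}}\left(\tfrac{2}{p}\right)^{\frac{2(2n+1)}{p}} .
\]
Raising both sides to the positive power $p/(p-2)$ yields
\[
|\Omega|\ge (1-\epsilon)^{\frac{p}{p-2}}\left(\tfrac{p}{2}\right)^{\frac{2(2n+1)}{p-2}},
\]
which is the claim.

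All steps are routine. The only point needing care is the normalization and homogeneity issue in the hypothesis (squared versus unsquared norms), which the reduction to $\|f\|_2=\|g\|_2=1$ resolves.
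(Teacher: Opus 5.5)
Your argument is correct and is exactly the paper's one-line proof: H\"older's inequality on $\Omega$ with exponents $p/2$ and $p/(p-2)$, followed by Lieb's inequality \eqref{resultlieb}. Reading the hypothesis as $(1-\epsilon)\|f\|_2^2\|g\|_2^2$, with $f,g\neq 0$ and $0<\epsilon<1$, is the right choice, but it is a correction of the statement rather than a reduction. As literally written, replacing $f$ by $cf$ with $c$ large makes the hypothesis hold on sets $\Omega$ of arbitrarily small measure, and for $f=0$ it holds with $\Omega=\emptyset$, so the unsquared version and the cases you set aside genuinely fail.
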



\section{Hirschman Uncertainty Principle}\label{sec Hirschman}

Let $f,g \in L^2(\mathbb{H}^n)$, where the window $g$ is normalized so that $\|g\|_2 = 1$. If $f$ is also normalized, i.e., $\|f\|_2 = 1$, then the Plancherel formula \eqref{eq:plancherel-formula} implies that the spectrogram $$\mathcal{V}(Q,P) := \lvert V_gf(Q,P) \rvert^2$$ is a probability density function on $\mathbb{H}^n \times \mathbb{H}^n$. The Shannon entropy of the spectrogram is defined by
$$E(\mathcal{V}) := - \int_{\mathbb{H}^n} \int_{\mathbb{H}^n} \mathcal{V}(Q,P) \ln \mathcal{V}(Q,P) \, dQ dP.$$

\begin{remark}
    We emphasize that the normalization assumptions on $f$ and $g$ are imposed solely to ensure that $\lvert V_gf \rvert^2$ is a probability density function. More generally, for arbitrary $f,g \in L^2(\mathbb{H}^n)$, the quantity $E(\lvert V_gf \rvert^2)$ remains well defined, although it is no longer the Shannon entropy of a probability density function.
\end{remark}

We are now in a position to establish the Hirschman uncertainty principle for the short-time Fourier transform on the Heisenberg group.

\begin{theorem}
    Let $f,g \in L^2(\mathbb{H}^n)$. Then
    \begin{equation}\label{resultentropy}
        E(\lvert V_gf \rvert^2) \geq \left( (2n+1) - 2 \ln(\|f\|_2 \|g\|_2) \right) \|f\|_2^2 \|g\|_2^2.
    \end{equation}
\end{theorem}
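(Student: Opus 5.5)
The plan is the standard Lieb-to-entropy argument: differentiate Lieb's inequality at the endpoint $p=2$, where it becomes an equality by the Plancherel formula. If $\|f\|_2\|g\|_2=0$, then $V_gf\equiv0$; with the convention $0\ln0=0$ both sides vanish. We may also assume $E(|V_gf|^2)<+\infty$, since otherwise there is nothing to prove. So assume $f,g\neq0$ and put $N:=\|f\|_2\|g\|_2$.

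Define, for $p\ge 2$,
\[
\Phi(p):=\int_{\mathbb{H}^n}\int_{\mathbb{H}^n}|V_gf(Q,P)|^p\,dQ\,dP-\left(\frac{2}{p}\right)^{2n+1}N^p .
\]
By Lieb's inequality \eqref{resultlieb}, $\Phi(p)\le0$ for all $p>2$. By the Plancherel formula \eqref{eq:plancherel-formula}, $\Phi(2)=\|V_gf\|_2^2-N^2=0$. Hence the one-sided derivative satisfies
\[
\Phi'(2^+)=\lim_{p\to2^+}\frac{\Phi(p)-\Phi(2)}{p-2}\le 0,
\]
provided this limit exists in $[-\infty,\infty)$.

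Formally, $\frac{d}{dp}\iint|V_gf|^p=\iint|V_gf|^p\ln|V_gf|$, which at $p=2$ equals $-\tfrac12E(|V_gf|^2)$. Also
\[
\frac{d}{dp}\Big[(2/p)^{2n+1}N^p\Big]_{p=2}=N^2\Big(-\frac{2n+1}{2}+\ln N\Big).
\]
The inequality $\Phi'(2^+)\le0$ then reads
\[
-\tfrac12E(|V_gf|^2)-N^2\Big(\ln N-\frac{2n+1}{2}\Big)\le0 .
\]
Multiplying by $-2$ gives $E(|V_gf|^2)\ge\big((2n+1)-2\ln N\big)N^2$, which is exactly \eqref{resultentropy}.

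The main obstacle is justifying differentiation under the integral at $p=2^+$. I would write
\[
\frac{|V_gf|^p-|V_gf|^2}{p-2}=|V_gf|^2\,\frac{e^{(p-2)\ln|V_gf|}-1}{p-2}
\]
and split the domain according to whether $|V_gf|\le1$ or $|V_gf|>1$.

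On the set where $|V_gf|\le1$, the logarithm is nonpositive, so the difference quotient is nonpositive. By convexity of $p\mapsto a^p$, the quotient increases pointwise as $p\downarrow2$ to $|V_gf|^2\ln|V_gf|$. Monotone convergence therefore applies and gives a limit in $[-\infty,0]$.

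On the set where $|V_gf|>1$, the bound \eqref{inf norm of Vgf} gives $1<|V_gf|\le N$. Then $0\le\ln|V_gf|\le\ln N$, and by the mean value theorem the quotient is dominated by $|V_gf|^2 N^{p-2}\ln N$. This majorant is integrable by Plancherel, so dominated convergence applies. (This set is empty if $N\le1$.)

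Combining the two pieces, $\Phi'(2^+)$ exists in $[-\infty,\infty)$ and equals $-\tfrac12E(|V_gf|^2)-N^2(\ln N-\tfrac{2n+1}{2})$. Since we assumed $E(|V_gf|^2)<+\infty$, this is a finite value bounded above by $0$. The result also covers the case where the entropy is finite.
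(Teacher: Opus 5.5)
Your argument is correct and is essentially the paper's. Both derive the entropy bound by differentiating Lieb's inequality at the endpoint $p=2$, where Plancherel gives equality. The paper first normalizes $\|f\|_2=\|g\|_2=1$, passes to the limit in $\frac{1}{1-\beta}\ln\iint\mathcal{V}^\beta$ via L'H\^opital, and then rescales. You instead differentiate $\Phi$ directly for general $N$ and justify the limit more carefully by splitting at $|V_gf|=1$.

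One small slip: by convexity of $p\mapsto a^p$, the difference quotient on $\{|V_gf|\le1\}$ \emph{decreases} (not increases) as $p\downarrow2$. Monotone convergence should therefore be applied to its negative, which still gives a limit in $[-\infty,0]$ as you claim.
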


\begin{proof}
    Let $f,g \in L^2(\mathbb{H}^n)$ be such that $\|f\|_2 = \|g\|_2 = 1$. If $E(\mathcal{V}) = E(|V_gf|^2) = \infty$, then the desired inequality follows immediately. Therefore, it suffices to assume that $E(\mathcal{V}) < \infty$.
    
    \noindent By Lieb's inequality \eqref{resultlieb}, for every $p > 2$, 
    $$\int_{\mathbb{H}^n} \int_{\mathbb{H}^n} |V_gf(Q,P)|^p dQ dP \leq \left( \frac{2}{p} \right)^{2n+1}.$$
    Setting $\beta = p/2$, we have $\beta > 1$ and consequently,
    $$ \int_{\mathbb{H}^n} \int_{\mathbb{H}^n} \mathcal{V}(Q,P)^\beta dQ dP \leq \beta^{-(2n+1)}.$$
    Taking the natural logarithm of both sides and dividing by $1 - \beta$, we obtain
    $$\frac{1}{1 - \beta} \, \ln \left( \int_{\mathbb{H}^n} \int_{\mathbb{H}^n} \mathcal{V}(Q,P)^\beta dQ dP \right) \geq -(2n+1) \frac{\ln \beta}{1 - \beta}.$$
    Passing to the limit as $\beta \to 1^{+}$, it follows that
    $$\lim_{\beta \to 1^{+}} \frac{1}{1 - \beta} \, \ln \left( \int_{\mathbb{H}^n} \int_{\mathbb{H}^n} \mathcal{V}(Q,P)^\beta dQ dP \right) \geq (2n+1).$$
    Applying L'H\^{o}pital's rule to the left-hand side yields
    $$-\frac{\displaystyle\lim_{\beta \to 1^{+}} \frac{d}{d\beta}\left(\int_{\mathbb{H}^n}\int_{\mathbb{H}^n} |\mathcal{V}(Q,P)|^\beta\,dQ\,dP \right)} {\displaystyle\lim_{\beta \to 1^{+}} \int_{\mathbb{H}^n}\int_{\mathbb{H}^n} \mathcal{V}(Q,P)^\beta \,dQ\,dP}.$$
    Since $E(\mathcal{V}) < \infty$, differentiation under the integral sign is justified by the dominated convergence theorem. Consequently,
    $$-\frac{\displaystyle\lim_{\beta \to 1^{+}} \int_{\mathbb{H}^n}\int_{\mathbb{H}^n} \mathcal{V}(Q,P)^\beta \, \ln \mathcal{V}(Q,P) \,dQ\,dP} {\displaystyle\lim_{\beta \to 1^{+}} \int_{\mathbb{H}^n}\int_{\mathbb{H}^n} \mathcal{V}(Q,P)^\beta \,dQ\,dP}.$$
    A further application of the dominated convergence theorem permits the interchange of the limit and the integral and therefore
    $$- \int_{\mathbb{H}^n}\int_{\mathbb{H}^n} \mathcal{V}(Q,P) \, \ln \mathcal{V}(Q,P) \,dQ\,dP \geq (2n+1).$$
    Since $\mathcal{V} = |V_gf|^2$, it follows that
    $$E(|V_gf|^2) \geq (2n+1).$$

    Now let $f,g \in L^2(\mathbb{H}^n)$ be arbitrary and define 
    $$\phi = \frac{f}{\|f\|_2} \qquad \text{and} \qquad \psi = \frac{g}{\|g\|_2}.$$
    The preceding argument implies that
    \begin{equation}\label{entropy}
    E(|V_\psi\phi|^2) \geq (2n+1).    
    \end{equation}
    Moreover, a direct computation shows that
    $$V_\psi\phi(Q,P) = \frac{1}{\|f\|_2 \|g\|_2} V_gf(Q,P).$$
    Hence,
    \begin{align*}
    E(|V_\psi\phi|^2) &= - \int_{\mathbb{H}^n} \int_{\mathbb{H}^n} |V_\psi\phi(Q,P)|^2 \ln |V_\psi\phi(Q,P)|^2 \, dQ dP\\
    &= - \int_{\mathbb{H}^n} \int_{\mathbb{H}^n} \frac{1}{\|f\|_2^2 \|g\|_2^2} |V_gf(Q,P)|^2 \ln \left( \frac{1}{\|f\|_2^2 \|g\|_2^2} |V_gf(Q,P)|^2 \right) dQ dP\\
    &= - \frac{1}{\|f\|_2^2 \|g\|_2^2} \int_{\mathbb{H}^n} \int_{\mathbb{H}^n} |V_gf(Q,P)|^2 \ln |V_gf(Q,P)|^2 \, dQ dP + 2 \ln (\|f\|_2 \|g\|_2).
    \end{align*}
    Using \eqref{entropy}, we obtain
    $$\frac{1}{\|f\|_2^2 \|g\|_2^2} E(|V_gf|^2) + 2 \ln (\|f\|_2 \|g\|_2) \geq (2n+1)$$
    and consequently,
    $$E(\lvert V_gf \rvert^2) \geq \left( (2n+1) - 2 \ln(\|f\|_2 \|g\|_2) \right) \|f\|_2^2 \|g\|_2^2,$$
    which is precisely the desired inequality \eqref{resultentropy}.
\end{proof}

\begin{remark}
    In 2024, Poria and the second author established an analogous entropy uncertainty principle for the short-time Fourier transform on the lattice $\mathbb{Z}^n\times\mathbb{T}^n$; see~\cite[Theorem~3.20]{poriadasgupta2024}. The constant in the lower bound obtained here is stronger than the corresponding constant appearing in the lattice setting.
\end{remark}


\section{Beurling--Hardy-type Uncertainty Principle}
\label{sec BeurlingHardy}

In this section, we establish a qualitative uncertainty principle that combines features of the classical Beurling and Hardy theorems. We assume that the short-time Fourier transform $V_gf$ satisfies a Beurling-type weighted integrability condition, while the Euclidean Fourier transform $\mathcal{F}(f)$ exhibits Gaussian decay of Hardy-type. These two assumptions impose excessive simultaneous decay in the phase-space and Fourier variables. Throughout the remainder of the paper, we denote the Euclidean norm of $Q \in \mathbb{H}^n \cong \mathbb{R}^{2n+1}$ by $|Q|$.

The proof again exploits the representation
\[
V_gf(Q,P)
=
\mathcal{F}\left(
\mathcal{F}(f)\,T_P\overline{\mathcal{F}(g)}
\right)(Q).
\]
For almost every fixed $P\in\mathbb{H}^n$, the imposed decay conditions yield a classical Beurling-type condition for
\[
\mathcal{F}(f)\,T_P\overline{\mathcal{F}(g)}.
\]
Applying the Euclidean Beurling theorem fiberwise and then integrating with respect to the Heisenberg translation parameter leads to the conclusion that either $f=0$ or $g=0$. The precise statement of thet result is as follows.

\begin{theorem}
    Suppose that 
    \begin{equation}\label{beurlingtypecondition}
    \int_{\mathbb{H}^n} \int_{\mathbb{H}^n} \left\lvert V_gf(Q,P) \right\rvert e^{\pi (|Q|^2 + |P|^2)} dQ dP < \infty,    
    \end{equation}
    and 
    \begin{equation}\label{hardytypecondition}
        \mathcal{F}(f)(X) = O(e^{-a |X|^2}),
    \end{equation}
    for some $a > \pi$. Then either $f = 0$ or $g = 0$.
\end{theorem}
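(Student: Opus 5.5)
Following the strategy sketched before the statement, I will argue fiberwise in the translation parameter $P$. Assume $g\neq 0$, and write $F=\mathcal{F}(f)$, $G=\mathcal{F}(g)$ and $h_P:=F\,T_P\overline{G}$. Then $V_gf(\cdot,P)=\mathcal{F}(h_P)$, as in \eqref{vgf in terms of h_p}.

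The Euclidean Beurling theorem states the following: if $h\in L^2(\mathbb{R}^d)$ and
\[
\iint|h(x)|\,|\mathcal{F}h(\xi)|\,e^{2\pi|x\cdot\xi|}\,dx\,d\xi<\infty,
\]
then $h=0$. My plan is to verify this hypothesis for $h_P$ for almost every $P$. The bound $|x\cdot\xi|\le |x||\xi|\le \frac12(|x|^2+|\xi|^2)$ gives $e^{2\pi|x\cdot \xi|}\le e^{\pi|x|^2}e^{\pi|\xi|^2}$. Hence it suffices to show
\[
\int_{\mathbb{R}^{2n+1}}|h_P(X)|e^{\pi|X|^2}\,dX<\infty
\quad\text{and}\quad
\int_{\mathbb{R}^{2n+1}}|V_gf(Q,P)|e^{\pi|Q|^2}\,dQ<\infty .
\]

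The second finiteness holds for a.e.\ $P$ by Fubini applied to \eqref{beurlingtypecondition}, after simply discarding the factor $e^{\pi|P|^2}\ge 1$. For the first, \eqref{hardytypecondition} gives $|F(X)|\lesssim e^{-a|X|^2}$ with $a>\pi$. Therefore
\[
\int|h_P|e^{\pi|X|^2}\,dX\lesssim \int e^{-(a-\pi)|X|^2}|G(X\cdot P)|\,dX
\le \big\|e^{-(a-\pi)|\cdot|^2}\big\|_2\,\|T_PG\|_2 ,
\]
by Cauchy--Schwarz. The last quantity is finite because $a>\pi$ and $\|T_PG\|_2=\|G\|_2$ by right invariance of Haar (Lebesgue) measure under Heisenberg translation. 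This bound holds for every $P$, and $h_P\in L^2$ as well since $F$ is bounded.

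The main point to check carefully is the assembly of the Beurling integrand. By Tonelli,
\[
\iint |h_P(X)||\mathcal{F}h_P(Q)|e^{2\pi|X\cdot Q|}\,dX\,dQ\le\Big(\int|h_P|e^{\pi|X|^2}\,dX\Big)\Big(\int|V_gf(Q,P)|e^{\pi|Q|^2}\,dQ\Big)<\infty
\]
for a.e.\ $P$. Beurling's theorem (in the form of Hörmander and Bonami--Demange--Jaming) then yields $h_P=0$ for a.e.\ $P$.

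To conclude, I will repeat the Fubini computation from the proof of Theorem \ref{benedicks}:
\[
0=\iint|F(X)|^2|G(X\cdot P)|^2\,dX\,dP=\|F\|_2^2\|G\|_2^2 .
\]
Since $g\ne0$, this forces $F=0$, and hence $f=0$ by Plancherel. I do not expect a serious obstacle. The only delicate point is the measure-theoretic bookkeeping of the exceptional null sets of $P$, and noting that the weight $e^{\pi|P|^2}$ in \eqref{beurlingtypecondition} is stronger than what is needed.
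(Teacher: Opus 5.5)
Your proposal is correct and follows the paper's proof essentially step for step: the fiberwise identity $V_gf(\cdot,P)=\mathcal{F}(h_P)$, the bound $e^{2\pi|\langle X,Q\rangle|}\le e^{\pi(|X|^2+|Q|^2)}$ to factor the Beurling integral, Fubini applied to the hypothesis for the $Q$-factor, Cauchy--Schwarz with the Gaussian decay of $\mathcal{F}(f)$ and $\|T_PG\|_2=\|G\|_2$ for the $X$-factor, Beurling's theorem on $\mathbb{R}^{2n+1}$, and finally the Benedicks-style Fubini argument. The only differences are cosmetic: you split the Cauchy--Schwarz as $e^{-(a-\pi)|X|^2}\cdot|T_PG|$ instead of $|\mathcal{F}(f)|e^{\pi|X|^2}\cdot|T_P\overline{G}|$, which is the same estimate, and you write out the closing Fubini computation that the paper only cites.
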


\begin{proof}
    If $g = 0$, then there is nothing to prove. Hence, we may assume that $g \neq 0$. 

    For each fixed $P \in \mathbb{H}^n$, consider the integral
    $$I := \int_{\mathbb{R}^{2n+1}} \int_{\mathbb{R}^{2n+1}} \left\lvert (\mathcal{F}(f) T_P \overline{\mathcal{F}(g)})(X) \; \mathcal{F}\left(\mathcal{F}(f) T_P \overline{\mathcal{F}(g)}\right)(Q) \right\rvert e^{2 \pi |\langle X,Q \rangle|} dX dQ.$$
    Then
    \begin{align*}
        I &\leq \int_{\mathbb{R}^{2n+1}} \int_{\mathbb{R}^{2n+1}} \left\lvert (\mathcal{F}(f) T_P \overline{\mathcal{F}(g)})(X) \; \mathcal{F}\left(\mathcal{F}(f) T_P \overline{\mathcal{F}(g)}\right)(Q) \right\rvert e^{\pi (|X|^2 + |Q|^2)} dX dQ\\
        &= \left( \int_{\mathbb{H}^{n}} \left\lvert (\mathcal{F}(f) T_P \overline{\mathcal{F}(g)})(X) \right\rvert e^{\pi |X|^2} dX \right) \left( \int_{\mathbb{H}^{n}} \left\lvert \mathcal{F}(\mathcal{F}(f) T_P \overline{\mathcal{F}(g)})(Q) \right\rvert e^{\pi |Q|^2} dQ \right).
    \end{align*}
    Since $$V_gf(Q,P) = \mathcal{F}\left(\mathcal{F}(f)\,T_P\overline{\mathcal{F}(g)} \right)(Q),$$ it follows immediately from \eqref{beurlingtypecondition} that
    $$\int_{\mathbb{H}^{n}} \left\lvert \mathcal{F}(\mathcal{F}(f) T_P \overline{\mathcal{F}(g)})(Q) \right\rvert e^{\pi |Q|^2} dQ < \infty$$
    for almost every $P \in \mathbb{H}^n$.
    
    \noindent Next, by H\"{o}lder inequality together with \eqref{hardytypecondition}, we have
    \begin{align*}
        \int_{\mathbb{H}^{n}} \left\lvert (\mathcal{F}(f) T_P \overline{\mathcal{F}(g)})(X) \right\rvert e^{\pi |X|^2} dX &\leq \left( \int_{\mathbb{H}^{n}} |\mathcal{F}(f)(X)|^2 e^{2 \pi |X|^2} dX \right)^{1/2} \left( \int_{\mathbb{H}^{n}} |T_P \overline{\mathcal{F}(g)}(X)|^2 dX \right)^{1/2}\\
        &\lesssim \left(\int_{\mathbb{H}^{n}} e^{-2 (a - \pi) |X|^2} dX \right)^{1/2} \|g\|_2 < \infty,
    \end{align*}
    since $a > \pi$. This implies that the integral $I$ is finite for almost every $P \in \mathbb{H}^n$.
    
    \noindent Therefore, Beurling's uncertainty principle on $\mathbb{R}^{2n+1}$ yields that 
    $$\mathcal{F}(f) \, T_P \overline{\mathcal{F}(g)} = 0, \qquad \text{for a.e. } P.$$
    Finally, arguing exactly as in the proof of the Benedicks theorem (Theorem \ref{benedicks}), we conclude that $f = 0$, which completes the proof.
\end{proof}


\section{Heisenberg-type Inequality and Local Uncertainty Principles}\label{sec Heisenberg}

In this section, we establish the Heisenberg-type uncertainty inequality, which provides a quantitative lower bound for the weighted $L^2$-norms of the short-time Fourier transform with respect to both variables, thereby showing that these weighted norms cannot be simultaneously made arbitrarily small. As an application, we derive a local uncertainty estimate that controls the concentration of the short-time Fourier transform on measurable subsets of finite measure. Finally, we establish local uncertainty principles in the spirit of Price for the short-time Fourier transform on the Heisenberg group.

\begin{theorem}
  For every \(s>0\), there exists a constant \(C_s>0\) such that, for every \(f,g \in L^2(\mathbb{H}^n)\),
  \[
  \lVert \, \lvert Q \rvert^s \, V_gf \rVert^2_{2}+ \lVert \, \lvert P \rvert^s \, V_gf \rVert^2_{2} \geq C_s \lVert f \rVert^2_2 \lVert g \rVert^2_2.
  \]
\end{theorem}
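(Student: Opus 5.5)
The plan is to use the $L^\infty$ bound $\|V_gf\|_\infty\le\|f\|_2\|g\|_2$ together with the Plancherel formula \eqref{eq:plancherel-formula}, splitting phase space into a small ball around the origin and its complement. This is the standard argument that derives a Heisenberg-type inequality from a local (Donoho--Stark type) estimate, so no fine Fourier analysis is needed.

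We may assume $f,g\neq0$. Fix $r>0$ and let
\[
B_r:=\{(Q,P)\in\mathbb{H}^n\times\mathbb{H}^n:\ |Q|<r,\ |P|<r\}.
\]
Its Lebesgue measure is $|B_r|=(\omega_{2n+1}r^{2n+1})^2=\omega_{2n+1}^2r^{4n+2}$, where $\omega_{2n+1}$ is the volume of the Euclidean unit ball in $\mathbb{R}^{2n+1}$. By \eqref{inf norm of Vgf},
\[
\|\chi_{B_r}V_gf\|_2^2\le |B_r|\,\|f\|_2^2\|g\|_2^2 .
\]
On the complement $B_r^c$ we have $|Q|\ge r$ or $|P|\ge r$. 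Hence $|Q|^{2s}+|P|^{2s}\ge r^{2s}$ there, and
\[
\|\chi_{B_r^c}V_gf\|_2^2\le r^{-2s}\Bigl(\||Q|^sV_gf\|_2^2+\||P|^sV_gf\|_2^2\Bigr).
\]
The weighted norms on the right are integrals over all of phase space and may be infinite, in which case the claim is trivial.

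Adding the two estimates and using the Plancherel formula gives
\[
\|f\|_2^2\|g\|_2^2\le \omega_{2n+1}^2r^{4n+2}\|f\|_2^2\|g\|_2^2+r^{-2s}\Bigl(\||Q|^sV_gf\|_2^2+\||P|^sV_gf\|_2^2\Bigr).
\]
Choose $r$ so that $\omega_{2n+1}^2r^{4n+2}=\tfrac12$. Then
\[
\||Q|^sV_gf\|_2^2+\||P|^sV_gf\|_2^2\ge \tfrac12 r^{2s}\|f\|_2^2\|g\|_2^2 ,
\]
so the statement holds with
\[
C_s=\tfrac12\,(2\omega_{2n+1}^2)^{-s/(2n+1)}.
\]
This constant depends only on $s$ and $n$, which is consistent with the statement. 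A slightly better constant can be obtained by optimizing over $r$ instead of using the half-split, but this is not needed.

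There is no serious obstacle here. The only step needing care is the complement estimate, which must use $\max(|Q|,|P|)\ge r$ on $B_r^c$ correctly; defining $B_r$ as a product of balls makes this immediate. A sharper constant could be obtained by replacing the $L^\infty$ bound with Lieb's inequality \eqref{resultlieb} and Hölder, which bounds $\|\chi_{B_r}V_gf\|_2^2$ by $|B_r|^{1-2/p}(2/p)^{(2n+1)2/p}\|f\|_2^2\|g\|_2^2$, but this refinement is optional.
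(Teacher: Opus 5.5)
Your proof is correct and is essentially the paper's argument. The paper also splits phase space into a small set $B_\delta$ (a Euclidean ball in $\mathbb{R}^{4n+2}$ with $|B_\delta|<1$) and its complement, bounds the small set via $\|V_gf\|_\infty\le\|f\|_2\|g\|_2$ and Plancherel (packaged as Proposition~\ref{stft concentration on set of small measure}), and bounds the complement by the weight; your product of balls only removes the need for the auxiliary inequality $|(Q,P)|^{2s}\le 2^s(|Q|^{2s}+|P|^{2s})$ and so saves a factor $2^{-s}$ in the constant.
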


\begin{proof}
    For any \(\delta>0\), let  
    \[B_\delta := \{(Q,P) \in \mathbb{H}^n \times \mathbb{H}^n: \lvert (Q,P)\rvert<\delta\},\]
    where $|(Q,P)|$ denotes the Euclidean norm of $(Q,P) \in \mathbb{H}^n \times \mathbb{H}^n$. Choose \(\delta>0\) sufficiently small so that \(\lvert B_\delta \rvert < 1\). By Proposition \ref{stft concentration on set of small measure}, it follows that
    \[\iint_{B^c_\delta} \lvert V_gf(Q,P) \rvert^2\,dQ\,dP \geq \left(1-\lvert B_\delta \rvert \right)\lVert f \rVert^2_2 \lVert g \rVert^2_2.\]
Hence,
\begin{align}\label{weighted norm Vgf}
    \lVert f \rVert^2_2 \lVert g \rVert^2_2 &\leq \frac{1}{1-\lvert B_\delta \rvert}\iint_{B^c_\delta} \lvert V_gf(Q,P) \rvert^2\,dQ\,dP \nonumber \\
    &\leq \frac{1}{\delta^{2s}\left(1-\lvert B_\delta \rvert\right)}\iint_{\mathbb{H}^n \times \mathbb{H}^n
    } \lvert (Q,P) \rvert^{2s} \, \lvert V_gf(Q,P) \rvert^2\,dQ\,dP\nonumber\\
    &\leq \frac{1}{\delta^{2s}\left(1-\lvert B_\delta \rvert\right)} \left\lVert \, \lvert (Q,P) \rvert^{s} \, V_gf \right\rVert_2^2.
\end{align}
    Finally, using the elementary inequality 
    $$|(Q,P)|^{2s} \leq 2^s (|Q|^{2s} + |P|^{2s}),$$
we obtain
    $$\lVert f \rVert^2_2 \lVert g \rVert^2_2 \leq \frac{2^s}{\delta^{2s}\left(1-\lvert B_\delta \rvert\right)} \left(\lVert \, \lvert Q \rvert^s \, V_gf \rVert^2_2+  \lVert \, \lvert P \rvert^s \, V_gf \rVert^2_2\right).$$
Setting \[C_s=\frac{\delta^{2s}\left(1-\lvert B_\delta \rvert\right)}{2^s}\] gives the desired inequality.
\end{proof}

The preceding theorem immediately yields a local uncertainty estimate.

\begin{theorem}
     For every \(s>0\), there exists a constant \( \tilde{C}_s > 0 \) such that, for every \(f,g \in L^2(\mathbb{H}^n)\) and every measurable set \(\Omega \subseteq \mathbb{H}^n \times \mathbb{H}^n\) of finite measure,
  \begin{equation*}
  \lVert \chi_{\Omega} \, V_gf \rVert_{2} \leq \tilde{C}_s \, \lvert \Omega \rvert^{1/2} \left\lVert \, \lvert (Q,P)\rvert^s \, V_gf\right\rVert_2.      
  \end{equation*}
\end{theorem}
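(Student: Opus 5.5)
The plan is to combine the trivial sup-norm bound on $\Omega$ with the weighted lower bound \eqref{weighted norm Vgf} obtained in the proof of the preceding Heisenberg-type theorem. That bound is formulated with the full weight $|(Q,P)|^s$, which is exactly the weight appearing in the present statement.

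First, H\"older's inequality together with \eqref{inf norm of Vgf} gives
\[
\lVert \chi_{\Omega} V_gf\rVert_2^2 \le \lvert \Omega\rvert\, \lVert V_gf\rVert_\infty^2 \le \lvert \Omega\rvert\, \lVert f\rVert_2^2\lVert g\rVert_2^2 .
\]

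Second, fix $\delta>0$ with $\lvert B_\delta\rvert<1$, where $B_\delta$ is the Euclidean ball in $\mathbb{H}^n\times\mathbb{H}^n$ used in the previous proof. Proposition \ref{stft concentration on set of small measure} then yields \eqref{weighted norm Vgf}:
\[
\lVert f\rVert_2^2\lVert g\rVert_2^2 \le \frac{1}{\delta^{2s}(1-\lvert B_\delta\rvert)}\,\bigl\lVert \lvert (Q,P)\rvert^s V_gf\bigr\rVert_2^2 .
\]

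Substituting the second inequality into the first and taking square roots gives
\[
\lVert \chi_\Omega V_gf\rVert_2 \le \tilde{C}_s\,\lvert\Omega\rvert^{1/2}\,\bigl\lVert \lvert (Q,P)\rvert^s V_gf\bigr\rVert_2,
\qquad
\tilde C_s=\delta^{-s}(1-\lvert B_\delta\rvert)^{-1/2}.
\]
This constant depends only on $s$ (and $n$), once $\delta$ is fixed, say by taking $\lvert B_\delta\rvert=1/2$.

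If the weighted norm on the right is infinite, the statement is trivial. If $f=0$ or $g=0$, both sides vanish. So no separate cases are needed. The only point requiring care is to use the intermediate estimate \eqref{weighted norm Vgf} rather than the final form of the previous theorem, because the latter splits the weight into $|Q|^s$ and $|P|^s$ and would cost an extra factor $2^{s/2}$. Even that would be harmless, since only the existence of some constant $\tilde C_s$ is claimed. I therefore expect no genuine obstacle.
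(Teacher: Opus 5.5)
Your proposal is correct and matches the paper's proof: it bounds $\lVert \chi_\Omega V_gf\rVert_2\le \lvert\Omega\rvert^{1/2}\lVert V_gf\rVert_\infty\le\lvert\Omega\rvert^{1/2}\lVert f\rVert_2\lVert g\rVert_2$ and inserts the intermediate estimate \eqref{weighted norm Vgf}, giving the same constant $\tilde C_s=\delta^{-s}(1-\lvert B_\delta\rvert)^{-1/2}$ for any fixed $\delta$ with $\lvert B_\delta\rvert<1$. (Minor aside: going through the split form of the previous theorem would cost a factor $2^{(s+1)/2}$ rather than $2^{s/2}$, because $\lvert Q\rvert^{2s}+\lvert P\rvert^{2s}\le 2\lvert (Q,P)\rvert^{2s}$; as you note, this does not matter.)
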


\begin{proof}
    Using the estimate \[\lVert V_gf \rVert_{\infty} \leq \|f\|_2 \|g\|_2,\] together with \eqref{weighted norm Vgf}, we obtain
    \begin{equation}\label{localinequality}
        \lVert \chi_{\Omega} V_gf \rVert_2 \leq \lvert \Omega \rvert^{1/2}\|V_gf\|_\infty \leq \lvert \Omega \rvert^{1/2} \lVert f \rVert_2 \lVert g \rVert_2 \leq \frac{\lvert \Omega \rvert^{1/2}}{\delta^{s}\sqrt{ 1-\lvert B_\delta \rvert}} \lVert \, \lvert (Q,P) \lvert^s \, V_gf \rVert_2, 
    \end{equation}
    where \(\delta > 0\) is chosen so that \(\lvert B_\delta \rvert<1.\)
\end{proof}

As an immediate consequence, we obtain the following lower bound for the weighted $L^2$-norm of the short-time Fourier transform.

\begin{corollary}
    For every \(s>0\), there exists a constant \(\mathfrak{c}_s>0\) such that, for every \(f,g \in L^2(\mathbb{H}^n)\)
    \[\lVert \, \lvert (Q,P) \rvert^s \, V_gf \rVert_2 \geq \mathfrak{c}_s \lVert f \rVert_2 \lVert g \rVert_2.\]
\end{corollary}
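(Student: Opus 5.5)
The plan is to read the corollary off inequality \eqref{weighted norm Vgf}, established in the proof of the Heisenberg-type inequality above. No new analysis should be needed.

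First, fix any $\delta>0$ with $\lvert B_\delta\rvert<1$, where $B_\delta$ is the Euclidean ball of radius $\delta$ in $\mathbb{H}^n\times\mathbb{H}^n\cong\mathbb{R}^{4n+2}$. Such a $\delta$ exists because $\lvert B_\delta\rvert = c_n\delta^{4n+2}\to 0$ as $\delta\to0$. Proposition \ref{stft concentration on set of small measure} then gives
\[
\iint_{B_\delta^c}\lvert V_gf\rvert^2\,dQ\,dP\geq (1-\lvert B_\delta\rvert)\,\lVert f\rVert_2^2\lVert g\rVert_2^2 .
\]
On $B_\delta^c$ we have $\lvert (Q,P)\rvert\geq\delta$, hence $1\leq \delta^{-2s}\lvert (Q,P)\rvert^{2s}$. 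Inserting this into the left-hand side and enlarging the domain of integration to all of $\mathbb{H}^n\times\mathbb{H}^n$ gives exactly \eqref{weighted norm Vgf}:
\[
\lVert f\rVert_2^2\lVert g\rVert_2^2\leq \frac{1}{\delta^{2s}(1-\lvert B_\delta\rvert)}\left\lVert\,\lvert (Q,P)\rvert^s V_gf\right\rVert_2^2 .
\]
Taking square roots then yields the claim with
\[
\mathfrak{c}_s=\delta^{s}\sqrt{1-\lvert B_\delta\rvert}>0 .
\]
This constant depends only on $s$ and $n$, through the fixed choice of $\delta$, and not on $f$ or $g$.

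Alternatively, the corollary can be obtained from the preceding local uncertainty theorem. Choose $\Omega=B_\delta$ and combine $\lVert\chi_{B_\delta}V_gf\rVert_2\leq \tilde C_s\lvert B_\delta\rvert^{1/2}\lVert\,\lvert(Q,P)\rvert^sV_gf\rVert_2$ with the lower bound on the complementary part. The direct route above is cleaner, so I will use it.

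I do not expect a real obstacle. The only points to check are the following. If the weighted norm is infinite, the inequality holds trivially. If $f=0$ or $g=0$, both sides vanish. Finally, the argument never uses finiteness of the weighted norm, so no approximation argument is needed.
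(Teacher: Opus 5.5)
Your proof is correct, and for each admissible $\delta$ your constant $\mathfrak{c}_s=\delta^{s}\sqrt{1-\lvert B_\delta\rvert}$ is the same as the one the paper obtains.

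The routes differ slightly. The paper uses Plancherel to split $\lVert f\rVert_2^2\lVert g\rVert_2^2=\lVert\chi_{B_\delta}V_gf\rVert_2^2+\lVert\chi_{B_\delta^c}V_gf\rVert_2^2$. It bounds the first piece by the local estimate \eqref{localinequality} with $\Omega=B_\delta$, and the second by the trivial bound $\delta^{-2s}\lVert\,\lvert(Q,P)\rvert^sV_gf\rVert_2^2$. The two coefficients then add up to $\delta^{-2s}(1-\lvert B_\delta\rvert)^{-1}$. Since \eqref{localinequality} was itself derived from \eqref{weighted norm Vgf}, this split-and-recombine just reproduces \eqref{weighted norm Vgf}. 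Reading the corollary off that inequality directly, as you do, therefore loses nothing and is shorter.

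The paper's one addition is to take the supremum of $\delta^{2s}(1-\lvert B_\delta\rvert)$ over all $\delta$ with $\lvert B_\delta\rvert<1$. This gives the best constant the method can produce, and you can add it in one line.

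Two small remarks. In your sketched alternative, what gets combined with the local estimate on $B_\delta$ is the trivial \emph{upper} bound on $B_\delta^c$, not a lower bound. Your separate treatment of $g=0$ is needed because Proposition~\ref{stft concentration on set of small measure} assumes $g\neq 0$; the paper leaves this point implicit.
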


\begin{proof}
    For any \(\delta>0\), let  
    \[B_\delta := \{(Q,P) \in \mathbb{H}^n \times \mathbb{H}^n: \lvert (Q,P) \rvert < \delta\}.\]
    Using the Plancherel formula \eqref{eq:plancherel-formula} and inequality \eqref{localinequality}, we obtain
    \begin{align*}
    \|f\|_2^2 \|g\|_2^2 = \|V_gf\|_2^2 &= \|\chi_{B_\delta} V_gf\|_2^2 + \|\chi_{B_\delta^c} V_gf\|_2^2\\    &\leq \frac{|B_\delta|}{\delta^{2s} (1-|B_\delta|)} \lVert \, \lvert (Q,P) \rvert^s \, V_gf \rVert_2^2 + \delta^{-2s} \lVert \, \lvert (Q,P) \lvert^s \, V_gf \rVert_2^2\\
    &= \frac{1}{\delta^{2s} (1 - |B_\delta|)} \lVert \, \lvert (Q,P) \rvert^s \, V_gf \rVert_2^2,
    \end{align*}
    for every $\delta > 0$ satisfying $|B_\delta| < 1$.
    
    \noindent Rearranging the previous inequality yields
    $$\lVert \, \lvert(Q,P)\rvert^s \, V_gf \rVert_2^2 \geq \delta^{2s} (1 - |B_\delta|) \lVert f \rVert_2^2 \lVert g \rVert_2^2,$$
    for every $\delta > 0$ satisfying $|B_\delta| < 1$. Taking the supremum over all such $\delta$, we conclude that
    $$\lVert \, \lvert(Q,P)\rvert^s \, V_gf \rVert_2 \geq \mathfrak{c}_s \lVert f \rVert_2 \lVert g \rVert_2,$$
    where
    $$0 < \mathfrak{c}_s = \left( \sup_{\substack{\delta>0 \\ |B_\delta| < 1}} \delta^{2s} (1-|B_\delta|) \right)^{1/2} < \infty.$$
    This completes the proof.
\end{proof}

The preceding theorem controls the concentration of the short-time Fourier transform on measurable sets of finite measure in terms of its weighted $L^2$-norm. We now establish Price's local uncertainty principles, in which the concentration of the short-time Fourier transform is instead controlled by weighted norm of the Fourier transform of the underlying function. This result may be viewed as an analogue of Price's local uncertainty principle for the Euclidean Fourier transform in the setting of the short-time Fourier transform on the Heisenberg group.

In 1986, Price proved the following local uncertainty principle for the Euclidean Fourier transform \cite[Theorem 1.1]{Price-sharp_local_uncertainty}. If \(E \subseteq \mathbb{R}^n\) is measurable and \(\alpha > n/2\), then
\begin{equation*}
   \int_{E}| \widehat{f}(\xi)|^2 d\xi \lesssim |E| \; \lVert f \rVert_2^{2-n/\alpha} \lVert \, |x|^{\alpha}f \rVert_2^{n/\alpha},
\end{equation*}
for every \(f \in L^2(\mathbb{R}^n)\), where \(\widehat{f}\) denotes the Fourier transform of \(f\) in \(\mathbb{R}^n\).

Motivated by Price's result, we establish an analogous local uncertainty principle for the short-time Fourier transform on the Heisenberg group. Recall that the Heisenberg group \(\mathbb{H}^{n}\) has homogeneous dimension \(\mathcal{Q} = 2n + 2\) and the \emph{Korányi norm} on \(\mathbb{H}^{n}\) is given by  
\[
    |(z,t)| = \big(|z|^{4} + t^{2}\big)^{1/4},
\]
which is homogeneous of degree \(1\) with respect to the natural dilations of \(\mathbb{H}^{n}\) (see \cite{Pritam-Lacunary_spherical_function}). Henceforth, for $Q = (z,t) \in \mathbb{H}^n$, we write $\|Q\|$ to denote its Kor\'{a}nyi norm.

The precise statement of the corresponding local uncertainty principle is as follows.

\begin{theorem}
    Let $g \in L^2(\mathbb{H}^n)$ satisfy $\mathcal{F}(g) \in L^\infty(\mathbb{H}^n)$. If \(\Omega \subseteq \mathbb{H}^n \times \mathbb{H}^n\) is measurable and \(\alpha>\mathcal{Q}/2\), then
    \[ \lVert \chi_{\Omega}V_g f \rVert_2^2 \lesssim \lvert \Omega \rvert \, \|f\|_2^{2-\mathcal{Q}/\alpha} \,\left\| \, \|Q\|^\alpha \mathcal{F}(f)\right\|_2^{\mathcal{Q}/ \alpha},\]
    for every $f \in L^2(\mathbb{H}^n)$.
\end{theorem}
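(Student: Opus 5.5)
The plan is to combine the trivial $L^\infty$ bound for $V_gf$ with a Price/Carlson-type estimate for $\|\mathcal{F}(f)\|_1$ on $\mathbb{H}^n$, using the homogeneous structure given by the Korányi norm. We may assume $f\neq 0$ and $\|\,\|Q\|^\alpha\mathcal{F}(f)\|_2<\infty$, since otherwise there is nothing to prove.

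\textbf{Step 1 (reduction to $L^1$ of $\mathcal{F}(f)$).} First,
\[
\|\chi_\Omega V_gf\|_2^2\le |\Omega|\,\|V_gf\|_\infty^2 .
\]
Instead of bounding $\|V_gf\|_\infty$ by $\|f\|_2\|g\|_2$ as in \eqref{inf norm of Vgf}, we use the representation $V_gf(Q,P)=\mathcal{F}\bigl(\mathcal{F}(f)\,T_P\overline{\mathcal{F}(g)}\bigr)(Q)$. This gives
\[
|V_gf(Q,P)|\le \int_{\mathbb{R}^{2n+1}}|\mathcal{F}(f)(X)|\,|\mathcal{F}(g)(X\cdot P)|\,dX\le \|\mathcal{F}(g)\|_\infty\,\|\mathcal{F}(f)\|_1 .
\]
The hypothesis $\mathcal{F}(g)\in L^\infty$ enters exactly here. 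It therefore suffices to prove
\[
\|\mathcal{F}(f)\|_1\lesssim \|f\|_2^{1-\mathcal{Q}/(2\alpha)}\,\bigl\|\,\|X\|^\alpha\mathcal{F}(f)\bigr\|_2^{\mathcal{Q}/(2\alpha)},
\]
and then square the result.

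\textbf{Step 2 (splitting at a Korányi radius).} For $r>0$, let $B_r=\{X:\|X\|<r\}$. On $B_r$, Cauchy--Schwarz and Plancherel give
\[
\int_{B_r}|\mathcal{F}(f)|\le |B_r|^{1/2}\|f\|_2 .
\]
On the complement, write $|\mathcal{F}(f)|=\|X\|^{-\alpha}\cdot\|X\|^\alpha|\mathcal{F}(f)|$ and apply Cauchy--Schwarz:
\[
\int_{B_r^c}|\mathcal{F}(f)|\le\Bigl(\int_{\|X\|\ge r}\|X\|^{-2\alpha}\,dX\Bigr)^{1/2}\bigl\|\,\|X\|^\alpha\mathcal{F}(f)\bigr\|_2 .
\]

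\textbf{Step 3 (the main technical point).} We need the scaling facts
\[
|B_r|=r^{\mathcal{Q}}|B_1| \quad\text{and}\quad \int_{\|X\|\ge r}\|X\|^{-2\alpha}\,dX=c_\alpha\, r^{\mathcal{Q}-2\alpha},
\]
with $c_\alpha<\infty$ precisely when $\alpha>\mathcal{Q}/2$. I would prove them using the dilations $\delta_r(z,t)=(rz,r^2t)$. These have Jacobian $r^{2n+2}=r^{\mathcal{Q}}$ and satisfy $\|\delta_rX\|=r\|X\|$. Decomposing $\{\|X\|\ge r\}$ into the dyadic shells $\{2^kr\le\|X\|<2^{k+1}r\}$, each shell has measure $(2^kr)^{\mathcal{Q}}(2^{\mathcal{Q}}-1)|B_1|$. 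This yields a geometric series in $2^{k(\mathcal{Q}-2\alpha)}$, which converges because $\alpha>\mathcal{Q}/2$. We also need $|B_1|<\infty$, which holds since $B_1$ is contained in a Euclidean ball.

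\textbf{Step 4 (optimization).} Steps 2 and 3 give
\[
\|\mathcal{F}(f)\|_1\lesssim r^{\mathcal{Q}/2}\|f\|_2+r^{\mathcal{Q}/2-\alpha}\bigl\|\,\|X\|^\alpha\mathcal{F}(f)\bigr\|_2 .
\]
Choose $r^\alpha=\|\,\|X\|^\alpha\mathcal{F}(f)\|_2/\|f\|_2$, which is positive since $f\ne0$. Both terms then equal $\|f\|_2^{1-\mathcal{Q}/(2\alpha)}\|\,\|X\|^\alpha\mathcal{F}(f)\|_2^{\mathcal{Q}/(2\alpha)}$. Squaring and inserting into Step 1 gives the theorem, with implicit constant depending on $n$, $\alpha$ and $\|\mathcal{F}(g)\|_\infty^2$. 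I expect the only genuine obstacle to be the homogeneous-dimension scaling in Step 3; the rest is routine.
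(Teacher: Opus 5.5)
Your proposal is correct and follows the paper's proof. The paper likewise bounds $\|\chi_\Omega V_gf\|_2^2\le|\Omega|\,\|\mathcal{F}(g)\|_\infty^2\|\mathcal{F}(f)\|_1^2$ and then controls $\|\mathcal{F}(f)\|_1$ by a Carlson-type inequality. The only difference is that you prove that inequality yourself, by splitting at a Korányi ball, using homogeneous scaling and optimizing $r$, whereas the paper cites it as \cite[Lemma 5.2]{dabra2026uncertainty} with $p=1$, $q=2$.
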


\begin{proof}
    If 
\[
\lvert \Omega \rvert \, \|f\|_2^{2-\mathcal{Q}/\alpha} \,\left\| \, \|Q\|^\alpha \mathcal{F}(f)\right\|_2^{\mathcal{Q}/ \alpha} = \infty,
\]
then there is nothing to prove. Hence, we may assume that the above quantity is finite.

\noindent Since $|\Omega|<\infty$, we have
\begin{align*}
\|\chi_{\Omega}V_gf\|_2^2
&\leq
|\Omega|
\sup_{(Q,P)\in\mathbb{H}^n\times\mathbb{H}^n}
|V_gf(Q,P)|^2 \\
&=
|\Omega|
\left(
\sup_{(Q,P)\in\mathbb{H}^n\times\mathbb{H}^n}
\left|
\mathcal{F}\left(
\mathcal{F}(f)\,T_P\overline{\mathcal{F}(g)}
\right)(Q)
\right|
\right)^2 \\
&\leq
|\Omega|
\left(
\sup_{P\in\mathbb{H}^n}
\left\lVert
\mathcal{F}(f)\,T_P\overline{\mathcal{F}(g)}
\right\rVert_1
\right)^2.
\end{align*}

\noindent By H\"older's inequality, 
\[
\left\lVert
\mathcal{F}(f)\,T_P\overline{\mathcal{F}(g)}
\right\rVert_1
\leq
\lVert\mathcal{F}(f)\rVert_1
\lVert T_P\overline{\mathcal{F}(g)}\rVert_\infty.
\]
Since $\mathcal{F}(g) \in L^\infty(\mathbb{H}^n)$, we have
\[
\sup_{P\in\mathbb{H}^n}
\lVert T_P\overline{\mathcal{F}(g)}\rVert_\infty
=
\lVert\mathcal{F}(g)\rVert_\infty.
\]
Consequently,
\begin{equation}\label{estimate of Vgf over omega 2 norm}
\|\chi_{\Omega}V_gf\|_2^2
\leq
|\Omega|\,
\lVert\mathcal{F}(g)\rVert_\infty^2
\lVert\mathcal{F}(f)\rVert_1^2.
\end{equation}
    Next, by applying \cite[Lemma 5.2]{dabra2026uncertainty} with $p = 1$ and $q = 2$, we obtain
    \begin{equation}\label{estimate from blms paper}
        \lVert \mathcal{F}(f)\rVert_1 \lesssim \Vert \mathcal{F}(f) \rVert_2^{1 - \mathcal{Q}/2\alpha} \, \| \, \|Q\|^\alpha \, \mathcal{F}(f) \|_2^{\mathcal{Q}/2\alpha},
    \end{equation}
       where \(\alpha> \mathcal{Q}/2.\) Finally, combining the  estimates \eqref{estimate of Vgf over omega 2 norm} and \eqref{estimate from blms paper} yields the desired inequality.
\end{proof}

We now consider the complementary range \(0 < \alpha < \mathcal{Q}/2\). Our approach is motivated by the local uncertainty principle established in \cite{Price-Sitaram-Local_uncertainty}. The following lemma plays a crucial role in proving the corresponding local uncertainty principle in our framework. Since its proof follows verbatim from \cite[Theorem 1.1]{Price-Sitaram-Local_uncertainty}, we omit the details.

\begin{lemma}\label{pricelemma}
    Let \(f \in L^2(\mathbb{H}^n)\) and \(E \subseteq \mathbb{H}^n\) be measurable. Then
    \begin{equation*}
        \lVert \chi_E \mathcal{F}(f) \rVert_2^2 \lesssim \lvert E \rvert^{2\alpha/\mathcal{Q}} \; \lVert \, \| \cdot \|^\alpha f \rVert_2^2.
    \end{equation*}
\end{lemma}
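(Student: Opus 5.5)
Lemma \ref{pricelemma} is the Heisenberg-group version of the Price--Sitaram local uncertainty principle for $0<\alpha<\mathcal{Q}/2$. I would follow their scheme: split $f$ into a part near the origin and a part far away, using a cutoff adapted to the Korányi norm. Fix $r>0$ and write
\[
f=f\chi_{B_r}+f\chi_{B_r^c},
\]
where $B_r=\{X\in\mathbb{H}^n:\|X\|<r\}$. The Korányi ball satisfies $|B_r|=c_n r^{\mathcal{Q}}$, since Lebesgue measure on $\mathbb{R}^{2n+1}$ scales like $r^{\mathcal{Q}}$ under the Heisenberg dilations $(z,t)\mapsto(rz,r^2t)$. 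This scaling is the only place where the homogeneous dimension enters. By the triangle inequality,
\[
\|\chi_E\mathcal{F}(f)\|_2\le \|\chi_E\mathcal{F}(f\chi_{B_r})\|_2+\|\mathcal{F}(f\chi_{B_r^c})\|_2 .
\]

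\textbf{The far piece.} By Plancherel, and because $\|X\|\ge r$ on $B_r^c$,
\[
\|\mathcal{F}(f\chi_{B_r^c})\|_2=\|f\chi_{B_r^c}\|_2\le r^{-\alpha}\,\|\,\|\cdot\|^\alpha f\|_2 .
\]

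\textbf{The near piece.} I would use
\[
\|\chi_E\mathcal{F}(f\chi_{B_r})\|_2\le |E|^{1/2}\|f\chi_{B_r}\|_1 ,
\]
and then apply Cauchy--Schwarz with the weight:
\[
\|f\chi_{B_r}\|_1\le \|\,\|\cdot\|^\alpha f\|_2\Big(\int_{B_r}\|X\|^{-2\alpha}\,dX\Big)^{1/2}.
\]
The integral is finite exactly because $2\alpha<\mathcal{Q}$. I would compute it with polar coordinates for the Korányi norm, or by a dyadic decomposition into shells $B_{2^{-k}r}\setminus B_{2^{-k-1}r}$ together with the scaling of $|B_r|$. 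Either way it equals $C r^{\mathcal{Q}-2\alpha}$. Verifying this integrability and scaling on $\mathbb{H}^n\cong\mathbb{R}^{2n+1}$ is the one step specific to the Heisenberg setting, and I expect it to be the main (though modest) obstacle.

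\textbf{Optimizing $r$.} Combining the two pieces gives
\[
\|\chi_E\mathcal{F}(f)\|_2\lesssim \big(|E|^{1/2}r^{\mathcal{Q}/2-\alpha}+r^{-\alpha}\big)\,\|\,\|\cdot\|^\alpha f\|_2 .
\]
Choosing $r=|E|^{-1/\mathcal{Q}}$ makes both terms equal to $|E|^{\alpha/\mathcal{Q}}$. Squaring then yields the claimed bound $|E|^{2\alpha/\mathcal{Q}}\,\|\,\|\cdot\|^\alpha f\|_2^2$.

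\textbf{Degenerate cases.} If $|E|=\infty$ or $\|\,\|\cdot\|^\alpha f\|_2=\infty$, the inequality is trivial. If $|E|=0$, the left-hand side vanishes.
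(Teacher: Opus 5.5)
Your proof is correct and follows the route the paper intends. The paper omits the proof, saying it follows verbatim from Price--Sitaram's Theorem 1.1. That argument is exactly your splitting of $f$ at Kor\'{a}nyi radius $r$: the $L^1\to L^\infty$ bound plus Cauchy--Schwarz on the near part, Plancherel on the far part, and then the choice $r=|E|^{-1/\mathcal{Q}}$.

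You also correctly isolate what makes the verbatim transfer legitimate, namely $|B_r|=|B_1|r^{\mathcal{Q}}$. This gives $\int_{B_r}\|X\|^{-2\alpha}\,dX=C r^{\mathcal{Q}-2\alpha}$ precisely when $0<\alpha<\mathcal{Q}/2$, which is the range the lemma's statement leaves implicit.
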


As a consequence of the preceding lemma, we obtain the following Price-type local uncertainty principle for the short-time Fourier transform on the Heisenberg group.

\begin{theorem}
Let $g\in L^2(\mathbb{H}^n)$ satisfy $\|g\|_2=1$, and let
$\Omega\subseteq\mathbb{H}^n\times\mathbb{H}^n$ be measurable. If
$0<\alpha<\mathcal{Q}/2$, then for every
$f\in L^2(\mathbb{H}^n)$,
\[
\|\chi_\Omega V_gf\|_2^2
\lesssim
|\Omega|\,
\bigl\|\|\cdot\|^\alpha f\bigr\|_2 \,
\bigl\|\|\cdot\|^\alpha\mathcal{F}(f)\bigr\|_2.
\]
The implicit constant depends only on $n$ and $\alpha$.
\end{theorem}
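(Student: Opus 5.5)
Write $F=\mathcal{F}(f)$ and $G=\mathcal{F}(g)$, so that $\|G\|_2=\|g\|_2=1$. The plan is to bound $\|\chi_\Omega V_gf\|_2^2$ by $|\Omega|\,\|V_gf\|_\infty^2$ and then control $\|V_gf\|_\infty$ by a quantity that splits into the two weighted norms. Unlike the range $\alpha>\mathcal{Q}/2$, we cannot expect $F\in L^1$, so the sup bound has to be obtained differently. The natural route is an $L^2$ splitting of $F$ at a Korányi ball: for $r>0$, decompose
\[
F=\chi_{B_r}F+\chi_{B_r^c}F, \qquad B_r=\{X\in\mathbb{H}^n:\|X\|<r\}.
\]
The pointwise bound $|V_gf(Q,P)|=|\langle F, M_QT_PG\rangle|\le \|F\|_2\|G\|_2=\|F\|_2$ by Cauchy--Schwarz then gives
\[
\|V_gf\|_\infty\le \|\chi_{B_r}F\|_2+\|\chi_{B_r^c}F\|_2 .
\]

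The two pieces are estimated separately. For the first, Lemma \ref{pricelemma} with $E=B_r$ and homogeneity $|B_r|=c\,r^{\mathcal{Q}}$ give
\[
\|\chi_{B_r}F\|_2\lesssim r^{\alpha}\bigl\|\|\cdot\|^\alpha f\bigr\|_2 .
\]
For the second, using $\|X\|\ge r$ on $B_r^c$ gives
\[
\|\chi_{B_r^c}F\|_2\le r^{-\alpha}\bigl\|\|\cdot\|^\alpha F\bigr\|_2 .
\]
Hence
\[
\|V_gf\|_\infty\lesssim r^\alpha A+r^{-\alpha}B,
\qquad A=\bigl\|\|\cdot\|^\alpha f\bigr\|_2,\quad B=\bigl\|\|\cdot\|^\alpha\mathcal{F}(f)\bigr\|_2 .
\]

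Next we optimize over $r$. If $A$ and $B$ are both positive and finite, choosing $r^{2\alpha}=B/A$ yields $\|V_gf\|_\infty\lesssim 2\sqrt{AB}$, and squaring gives
\[
\|\chi_\Omega V_gf\|_2^2\le|\Omega|\,\|V_gf\|_\infty^2\lesssim |\Omega|\,AB ,
\]
with the constant depending only on the implicit constant of Lemma \ref{pricelemma} and the volume of the unit Korányi ball, hence only on $n$ and $\alpha$. The degenerate cases are routine. If $A$ or $B$ is infinite, or if $|\Omega|=\infty$ with the right side nonzero, there is nothing to prove. If $A=0$ or $B=0$, then $f=0$ or $F=0$ almost everywhere, so $V_gf=0$. (The letter $A$ here is the weighted norm of $f$, not the Hausdorff--Young constant $A_{p'}$ from Section \ref{sec Lieb}.)

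The step I expect to be the main obstacle is the application of Lemma \ref{pricelemma} with the sets $E=B_r$. One must check that the implicit constant there is uniform in $E$, that the lemma genuinely uses the range $0<\alpha<\mathcal{Q}/2$ (this is where integrability of $\|X\|^{-2\alpha}$ near the origin enters in the Price--Sitaram argument), and that the scaling $|B_r|^{2\alpha/\mathcal{Q}}=c\,r^{2\alpha}$ is correct for Lebesgue measure on $\mathbb{R}^{2n+1}$ under the Heisenberg dilations $(z,t)\mapsto(rz,r^2t)$. If uniformity in $E$ is unclear, I would fall back on the explicit Price--Sitaram proof: split $f$ at a ball of radius $s$, bound $\|\chi_E\mathcal{F}(f\chi_{B_s})\|_2\le |E|^{1/2}\|f\chi_{B_s}\|_1$, use Cauchy--Schwarz against $\|X\|^{-2\alpha}$, and then optimize $s$. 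This gives the needed bound on balls directly.
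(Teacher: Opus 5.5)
Your proposal is correct and is essentially the paper's proof. Both bound $\|\chi_\Omega V_gf\|_2^2\le|\Omega|\,\|F\|_2^2$, split $F$ at a Kor\'{a}nyi ball $B_r$, control the inner piece by Lemma \ref{pricelemma} with $|B_r|=|B_1|r^{\mathcal{Q}}$ and the outer piece by $\|X\|\ge r$, and then choose $r^{2\alpha}=B/A$. The differences are cosmetic: you use the triangle inequality for $\|F\|_2$, where the paper uses the orthogonal splitting $\|F\|_2^2=\|\chi_{B_r}F\|_2^2+\|\chi_{B_r^c}F\|_2^2$. Your concern about uniformity of the lemma's constant in $E$ is also unnecessary, since the Price--Sitaram argument you sketch yields a constant depending only on $n$ and $\alpha$.
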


\begin{proof}
Set
\[
A:=\bigl\|\|\cdot\|^\alpha f\bigr\|_2
\qquad\text{and}\qquad
B:=\bigl\|\|\cdot\|^\alpha\mathcal{F}(f)\bigr\|_2.
\]
If $f=0$, $|\Omega|=0$, or $|\Omega|AB=\infty$, then the desired
estimate is immediate. We may therefore assume that
\[
f\neq0,\qquad 0<|\Omega|<\infty,
\qquad\text{and}\qquad 0<A,B<\infty.
\]
Since $\|g\|_2=1$, the pointwise estimate for the short-time Fourier
transform gives
\[
\|\chi_\Omega V_gf\|_2^2
\leq
|\Omega|\,\|V_gf\|_\infty^2
\leq
|\Omega|\,\|f\|_2^2.
\]

For $r>0$, let
\[
B_r:=\bigl\{P\in\mathbb{H}^n:\|P\|<r\bigr\}.
\]
Using the Plancherel formula, Lemma~\ref{pricelemma}, and the estimate
$\|Q\|\geq r$ on $B_r^c$, we obtain
\begin{align*}
\|f\|_2^2
&=
\|\mathcal{F}(f)\|_2^2                                      \\
&=
\|\chi_{B_r}\mathcal{F}(f)\|_2^2
+
\|\chi_{B_r^c}\mathcal{F}(f)\|_2^2                         \\
&\lesssim
|B_r|^{2\alpha/\mathcal{Q}}A^2
+
r^{-2\alpha}B^2.
\end{align*}
Since the Korányi balls satisfy
\[
|B_r|=|B_1|r^{\mathcal{Q}},
\]
it follows that
\[
\|f\|_2^2
\lesssim
r^{2\alpha}A^2+r^{-2\alpha}B^2.
\]
Choosing
\[
r=\left(\frac{B}{A}\right)^{1/(2\alpha)}
\]
balances the two terms on the right-hand side and yields
\[
\|f\|_2^2\lesssim AB.
\]
Consequently,
\[
\|\chi_\Omega V_gf\|_2^2
\lesssim
|\Omega|\,
\bigl\|\|\cdot\|^\alpha f\bigr\|_2
\bigl\|\|\cdot\|^\alpha\mathcal{F}(f)\bigr\|_2,
\]
which completes the proof.
\end{proof}


\section{Decay Properties}\label{sec:decay}

In this section, we investigate the decay of the short-time Fourier transform $V_gf$ on the Heisenberg group in terms of ultradifferentiable regularity properties of
\[
F=\mathcal{F}(f)
\qquad\text{and}\qquad
G=\mathcal{F}(g).
\]
More precisely, we show that suitable quantitative bounds on the Euclidean derivatives of $F$ and $G$ yield estimates for arbitrary polynomial weights in the modulation variable $Q$. In this way,
regularity of the Fourier transforms of the signal and the window is converted into decay of the associated time-frequency representation.

The argument is based on the identity
\[
Q^\alpha V_gf(Q,P)
=
\frac{1}{(2\pi i)^{|\alpha|}}
\mathcal{F}\left(
\partial^\alpha
\bigl(F\,T_P\overline{G}\bigr)
\right)(Q),
\]
followed by the multi-index Leibniz rule. A feature specific to the present setting is that the Heisenberg translation $T_P$ does not commute with ordinary Euclidean differentiation. Differentiating
$T_PG$ produces additional terms whose coefficients are polynomial functions of the translation parameter $P$. Consequently, the resulting decay estimates are uniform when $P$ ranges over a bounded subset of $\mathbb{H}^n$.

To control the derivative terms arising from the Leibniz expansion, we use the sequence
\[
M_0^{\tau,\sigma}:=1,
\qquad
M_k^{\tau,\sigma}:=k^{\tau k^\sigma},
\qquad k\geq1,
\]
where $\tau>0$ and $\sigma>1$. This sequence satisfies the submultiplicative-type estimate
\[
M_{k-\ell}^{\tau,\sigma}M_\ell^{\tau,\sigma}
\leq
M_k^{\tau,\sigma},
\qquad 0\leq\ell\leq k.
\]
This property allows the derivative bounds for $F$ and $G$ to be combined efficiently after applying the Leibniz rule. Sequences of this form arise naturally in extended Gevrey regularity and more generally, in the theory of ultradifferentiable function spaces. For more details, see \cite{gevrey}.

We use the standard multi-index notation. For $$Q=(Q_1,Q_2,\dots,Q_{2n+1})\in\mathbb{R}^{2n+1}$$ and
$$\alpha=(\alpha_1,\alpha_2,\dots,\alpha_{2n+1})\in\mathbb{N}^{2n+1},$$ we write
\[
Q^\alpha=Q_1^{\alpha_1}Q_2^{\alpha_2}\cdots Q_{2n+1}^{\alpha_{2n+1}},
\qquad
|\alpha|=\alpha_1+\alpha_2+\cdots+\alpha_{2n+1},
\]
and
\[
\partial^\alpha
=\partial_{Q_1}^{\alpha_1}\partial_{Q_2}^{\alpha_2}\cdots
\partial_{Q_{2n+1}}^{\alpha_{2n+1}}.
\]

\noindent The following theorem gives the precise decay estimate. The formulation of this result is motivated by \cite[Theorem 4]{gevrey}.

\begin{theorem}
Let \(\tau>0\) and \(\sigma > 1\). Suppose that \(g \in L^2(\mathbb{H}^n)\) satisfies 
\begin{equation}\label{condition on G}
    \lVert \partial^{\gamma} G \rVert_1 \leq C_g^{|\gamma|^\sigma}|\gamma|^{\tau |\gamma|^\sigma}, \quad \text{for all } \gamma \in \mathbb{N}^{2n+1},
\end{equation}
and that \(f\in L^2(\mathbb{H}^n)\) satisfies
\begin{equation}\label{condition on F}
     \lVert \partial^{\gamma} F \rVert_\infty \leq C_f^{|\gamma|^\sigma}|\gamma|^{\tau |\gamma|^\sigma}, \quad \text{for all }\gamma \in \mathbb{N}^{2n+1}.
\end{equation}
Then, for every $\alpha \in \mathbb{N}^{2n+1}$, there exists a constant $C_{f,g} > 0$ such that, for \(P\) in a bounded subset of \(\mathbb{H}^n\),
\begin{equation*}
    \lvert Q^{\alpha}V_gf(Q,P) \rvert \leq \frac{1}{\pi^{|\alpha|}}
C_{f,g}^{|\alpha|^\sigma}|\alpha|^{\tau|\alpha|^{\sigma}} \left( |\alpha|+1 \right)^{2n}.
\end{equation*}
\end{theorem}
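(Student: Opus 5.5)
The plan is to start from the identity recalled at the beginning of this section,
\[
Q^\alpha V_gf(Q,P)=\frac{1}{(2\pi i)^{|\alpha|}}\,\mathcal{F}\bigl(\partial^\alpha(F\,T_P\overline{G})\bigr)(Q).
\]
This follows from $V_gf(\cdot,P)=\mathcal{F}(F\,T_P\overline{G})$ by integrating by parts $|\alpha|$ times. The integration by parts is justified because $F$ and its derivatives are bounded and $\partial^\gamma G\in L^1$ for all $\gamma$, which is enough to kill boundary terms after a routine cutoff approximation. Bounding the Fourier transform by the $L^1$ norm then gives
\[
|Q^\alpha V_gf(Q,P)|\le (2\pi)^{-|\alpha|}\,\|\partial^\alpha(F\,T_P\overline{G})\|_1 .
\]
The multi-index Leibniz rule expands the right-hand side as $\sum_{\beta\le\alpha}\binom{\alpha}{\beta}\|\partial^{\alpha-\beta}F\|_\infty\,\|\partial^\beta(T_P\overline G)\|_1$.

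The main obstacle is the term $\partial^\beta(T_P\overline G)$, because Heisenberg right translation does not commute with Euclidean derivatives. Writing $X\cdot P=(p+u,\,q+v,\,t+w+\tfrac12(pv-qu))$ with $P=(u,v,w)$, the chain rule gives
\[
\partial_{p_j}(G(X\cdot P))=(\partial_{p_j}G+\tfrac12 v_j\,\partial_tG)(X\cdot P),\qquad
\partial_{q_j}(G(X\cdot P))=(\partial_{q_j}G-\tfrac12 u_j\,\partial_tG)(X\cdot P),
\]
with $\partial_t$ unchanged. The key point is that these coefficients are constants depending only on $P$, so the differentiations commute. Hence $\partial^\beta T_P G$ is $T_P$ applied to a sum of at most $2^{|\beta|}$ terms of the form $c\,\partial^{\gamma}G$ with $|\gamma|=|\beta|$, each coefficient bounded by $(1+|P|)^{|\beta|}$. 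Right invariance of Haar measure gives $\|T_P\partial^\gamma G\|_1=\|\partial^\gamma G\|_1$. Using \eqref{condition on G}, this yields
\[
\|\partial^\beta(T_P\overline G)\|_1\le \bigl(2(1+R)\bigr)^{|\beta|}C_g^{|\beta|^\sigma}|\beta|^{\tau|\beta|^\sigma}
\]
uniformly for $|P|\le R$.

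Next I would combine this with \eqref{condition on F} and the submultiplicativity $M^{\tau,\sigma}_{k-\ell}M^{\tau,\sigma}_\ell\le M^{\tau,\sigma}_k$, applied with $k=|\alpha|$ and $\ell=|\beta|$. Since $\sigma>1$, we have $|\alpha-\beta|^\sigma+|\beta|^\sigma\le|\alpha|^\sigma$, so
\[
C_f^{|\alpha-\beta|^\sigma}C_g^{|\beta|^\sigma}\le \max(C_f,C_g,1)^{|\alpha|^\sigma}.
\]
Each Leibniz term is then bounded by $\binom{\alpha}{\beta}(2(1+R))^{|\beta|}\max(C_f,C_g,1)^{|\alpha|^\sigma}|\alpha|^{\tau|\alpha|^\sigma}$.

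Finally I would absorb the geometric factors. The sum $\sum_{\beta\le\alpha}\binom{\alpha}{\beta}(2(1+R))^{|\beta|}$ is at most $(3+2R)^{|\alpha|}$. Since $|\alpha|\le|\alpha|^\sigma$ (and the case $\alpha=0$ is trivial), this can be folded into $C_{f,g}^{|\alpha|^\sigma}$ by setting $C_{f,g}:=(3+2R)\max(C_f,C_g,1)$. Writing $(2\pi)^{-|\alpha|}=\pi^{-|\alpha|}2^{-|\alpha|}$, the spare factor $2^{-|\alpha|}$ can also be absorbed.

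The stated polynomial factor $(|\alpha|+1)^{2n}$ would arise if the Leibniz sum is organised by $|\beta|$ rather than $\beta$: the number of multi-indices $\beta\le\alpha$ with a fixed length is at most $(|\alpha|+1)^{2n}$, and bounding the summands uniformly produces that factor. Either bookkeeping gives an estimate at least as strong as the claimed bound. The hard step is the careful chain-rule computation for $T_P$, because it is the source of the $P$-dependence and hence of the restriction to bounded $P$.
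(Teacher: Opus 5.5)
Your proposal is correct and follows essentially the same route as the paper. Both arguments use the identity for $Q^\alpha V_gf$ via $\partial^\alpha(F\,T_P\overline{G})$, the Leibniz rule with the $L^\infty$--$L^1$ H\"older bound, and the chain rule writing $\partial^\beta T_P\overline{G}$ as $T_P$ of order-$|\beta|$ derivatives of $\overline{G}$ with coefficients polynomial in $P$; both then merge constants using the submultiplicativity of $M_k^{\tau,\sigma}$. The only difference is bookkeeping: the paper sums over all $\delta$ with $|\delta|=|\beta|$, and that is where its factor $\binom{|\beta|+2n}{2n}\le(|\alpha|+1)^{2n}$ comes from, rather than from regrouping the Leibniz sum as you guessed; your explicit count of at most $2^{|\beta|}$ terms together with $\sum_{\beta\le\alpha}\binom{\alpha}{\beta}x^{|\beta|}=(1+x)^{|\alpha|}$ gives a slightly sharper bound without that polynomial factor.
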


\begin{proof}
By the standard differentiation property of the Euclidean Fourier transform, for every
multi-index $\alpha$, we have
$$Q^{\alpha}V_gf(Q,P) = \frac{1}{(2\pi i)^{|\alpha|}}
\mathcal{F}\left(\partial^{\alpha}\left(F \, T_P\overline{G}\right) \right)(Q).$$
Applying the multi-index Leibniz rule gives
$$Q^{\alpha}V_gf(Q,P) = \frac{1}{(2\pi i)^{|\alpha|}} \sum_{\beta\leq\alpha} \binom{\alpha}{\beta} \mathcal{F}\left(\partial^{\alpha-\beta}F \, \partial^\beta(T_P\overline{G}) \right)(Q).$$
Hence, by the estimate $\|\mathcal{F}(h)\|_\infty \leq \|h\|_1$ and H\"older's inequality, we obtain
\begin{align}
\left|Q^{\alpha}V_gf(Q,P)\right|
&\leq
\frac{1}{(2\pi)^{|\alpha|}}
\sum_{\beta\leq\alpha}
\binom{\alpha}{\beta}
\left\lVert
\mathcal{F}\left(
\partial^{\alpha-\beta}F\,
\partial^\beta(T_P\overline{G})
\right)
\right\rVert_\infty \nonumber\\
&\leq
\frac{1}{(2\pi)^{|\alpha|}}
\sum_{\beta\leq\alpha}
\binom{\alpha}{\beta}
\left\lVert
\partial^{\alpha-\beta}F\,
\partial^\beta(T_P\overline{G})
\right\rVert_1 \nonumber\\
&\leq
\frac{1}{(2\pi)^{|\alpha|}}
\sum_{\beta\leq\alpha}
\binom{\alpha}{\beta}
\left\lVert
\partial^{\alpha-\beta}F
\right\rVert_\infty
\left\lVert
\partial^\beta(T_P\overline{G})
\right\rVert_1.
\label{bound for mod of lhs}
\end{align}

We next estimate the $L^1$ norm of $\partial^\beta(T_P\overline{G})$. The translation operator $T_P$ on the Heisenberg group is not compatible with ordinary Euclidean derivatives. In contrast to the Euclidean setting, where derivatives commute with translations, the noncommutative structure of the Heisenberg group gives rise to additional terms when differentiating a translated function. More precisely, by the chain rule, for every multi-index $\beta$, we have
\begin{equation*}
\partial^\beta T_P\overline{G}
=
T_P\left(
\sum_{|\delta|=|\beta|}
C_{\beta,\delta}(P)\,
\partial^\delta\overline{G}
\right),
\end{equation*}
where $C_{\beta,\delta}(P)$ are polynomial functions of the coordinates of $P$. Since $P$ ranges over a bounded subset of $\mathbb{H}^n$, there exists a constant $C\geq1$, such that
\begin{equation*}
\left|C_{\beta,\delta}(P)\right|
\leq C^{|\beta|},
\quad |\delta|=|\beta|.
\end{equation*}
Using the assumed estimate \eqref{condition on G} for the derivatives of $G$ with $\gamma = \delta$, we obtain
\begin{align*}
    \left\lVert \partial^{\beta}(T_P \overline{G}) \right \rVert_1& \leq C^{|\beta|} \sum_{|\delta|=|\beta|} \left\lVert \partial^{\delta}G \right\rVert_1\\
    &\leq C^{|\beta|} \sum_{|\delta|=|\beta|} C_g^{|\delta|^\sigma} |\delta|^{\tau|\delta|^\sigma}\\
    &=C^{|\beta|} C_g^{|\beta|^\sigma} |\beta|^{\tau|\beta|^\sigma} \binom{|\beta|+2n}{2n}.
\end{align*}
Substituting the estimate \eqref{condition on F} with $\gamma = \alpha - \beta$, together with the above estimate into \eqref{bound for mod of lhs}, we have 
\begin{align*}
\left| Q^{\alpha}V_gf(Q,P)\right|&\leq \frac{1}{(2\pi)^{|\alpha|}}
\sum_{\beta\leq\alpha}
\binom{\alpha}{\beta} C_f^{|\alpha-\beta|^\sigma}|\alpha-\beta|^{\tau|\alpha-\beta|^{\sigma}}C^{|\beta|}C_g^{|\beta|^\sigma}|\beta|^{\tau|\beta|^\sigma} \binom{|\beta|+2n}{2n}\\
& \leq \left(\frac{C}{2\pi}\right)^{|\alpha|} \binom{|\alpha|+2n}{2n} \sum_{\beta\leq\alpha}
\binom{\alpha}{\beta} C_f^{|\alpha-\beta|^\sigma}|\alpha-\beta|^{\tau|\alpha-\beta|^{\sigma}} C_g^{|\beta|^\sigma}|\beta|^{\tau|\beta|^\sigma}\\
& \leq \frac{1}{(2\pi)^{|\alpha|}} C_{f,g}^{|\alpha|^\sigma}|\alpha|^{\tau|\alpha|^{\sigma}} \binom{|\alpha|+2n}{2n} \sum_{\beta\leq\alpha}
\binom{\alpha}{\beta}\\
& \leq \frac{1}{\pi^{|\alpha|}}
C_{f,g}^{|\alpha|^\sigma}|\alpha|^{\tau|\alpha|^{\sigma}} \left( |\alpha|+1 \right)^{2n}.
\end{align*}
This proves the desired estimate.
\end{proof}


\section*{Acknowledgement}

The work leading to this article began when the first author was an Early Doctoral Fellow at the Indian Institute of Technology Delhi. The first author gratefully acknowledges the financial support provided by the Indian Institute of Technology Delhi during this period and subsequently by the Indian Institute of Science Education and Research Bhopal through grant ANRF/MAT/2025-2026/154. The second author acknowledges support from the Anusandhan National Research Funding grant ANRF/ARGM/2025/002342/MTR. The third author gratefully acknowledges the Indian Institute of Technology Delhi for providing the Institute Assistantship.

\section*{Data Availability} 
    Data sharing does not apply to this article as no datasets were generated or analyzed during the current study.

\section*{Competing Interests}
    The authors declare that they have no competing interests. 

\bibliographystyle{acm}
\bibliography{ref}

\end{document}